\date{}
\newtheorem{theorem}{Theorem}[section]
\newtheorem{lemma}[theorem]{Lemma}
\newtheorem{corollary}[theorem]{Corollary}
\newtheorem{remark}[theorem]{Remark}
\newtheorem{proposition}[theorem]{Proposition}
\newtheorem{example}[theorem]{Example}
\numberwithin{equation}{section}
\begin{document}

\centerline{{\bf On the Ambrosio--Figalli--Trevisan superposition principle}}

\vskip .1cm 
\centerline{{\bf for probability solutions to Fokker--Planck--Kolmogorov equations}}

\vskip .5cm

\centerline{\bf Vladimir I. Bogachev$^{a}$\footnote{corresponding author.
{\it E-mail addresses}: vibogach@mail.ru (V. Bogachev),
roeckner@math.uni-bielefeld.de (M. R\"ockner), starticle@mail.ru (S. Shaposhnikov)},
Michael R\"ockner$^{b}$,
Stanislav V. Shaposhnikov$^{a}$}

\vspace*{0.2cm}

\noindent
\small{$^{a}${\it Lomonosov Moscow State University, Russia and National Research
University Higher School of Economics, Russian Federation}

\noindent
$^{b}${\it Fakult\"at f\"ur Mathematik, Universit\"at Bielefeld,
D-33501 Bielefeld, Germany}
}

\vspace*{0.3cm}

{\bf Abstract} We prove a generalization of the known result of Trevisan on the
Ambrosio--Figalli--Trevisan superposition principle for probability solutions to the Cauchy problem
for the Fokker--Planck--Kolmogorov equation, according to which such a solution is generated
by a solution to the corresponding martingale problem.
The novelty is that in place of the integrability of the diffusion and drift coefficients
$A$ and $b$ with respect to the solution we require the integrability
of $(\|A(t,x)\|+|\langle b(t,x),x\rangle |)/(1+|x|^2)$. Therefore, in the case where there are no
a~priori global integrability conditions the function $\|A(t,x)\|+|\langle b(t,x),x\rangle |$
can be of quadratic growth. Moreover, as a corollary we obtain that under mild conditions on the initial distribution
it is sufficient to have the one-sided bound $\langle b(t,x),x\rangle \le C+C|x|^2 \log |x|$
along with $\|A(t,x)\|\le C+C|x|^2 \log |x|$.

\vskip .1in

Keywords: Fokker--Planck--Kolmogorov equation, martingale problem, superposition principle

\vskip .1in

AMS MSC 2010: 60J60; 35Q84

\section{Introduction}

We study solutions to the  Cauchy problem for the Fokker--Planck--Kolmogorov equation
\begin{equation}\label{z1}
\partial_t\mu_t=\partial_{x_i}\partial_{x_j}\bigl(a^{ij}\mu_t\bigr)
-\partial_{x_i}\bigl(b^i\mu_t\bigr), \quad \mu_0=\nu .
\end{equation}
Below we write this equation in the short form $\partial_t\mu_t=L^{*}\mu_t$,
where $L^{*}$ is the formal adjoint operator to the differential operator
$$
Lu=a^{ij}\partial_{x_i}\partial_{x_j}u+b^i\partial_{x_i}u,
$$
where the usual convention about summation over repeated indices is employed.
We assume throughout that the matrix  $A(t,x)=(a^{ij}(t,x))_{i,j\le d}$ is symmetric and
nonnegative definite and the functions $(t,x)\mapsto a^{ij}(t,x)$
and $(t,x)\mapsto b^i(t,x)$ are Borel measurable on $[0,T]\times\mathbb{R}^d$.
By a solution we
mean a mapping $t\mapsto \mu_t$ from $[0, T]$ to the space
of probability measures $\mathcal{P}(\mathbb{R}^d)$ that is
continuous with respect to the weak topology and satisfies the integral equality
$$
\int_{\mathbb{R}^d}\varphi\,d\mu_t=\int_{\mathbb{R}^d}\varphi\,d\nu
+\int_0^t\int_{\mathbb{R}^d} L\varphi\,d\mu_s\,ds
$$
for all $t\in[0, T]$ and all $\varphi\in C_0^{\infty}(\mathbb{R}^d)$, where it is assumed
that $a^{ij}$ and $b^i$ are locally (i.e., on compact sets
in $[0,T]\times \mathbb{R}^d$)
integrable with respect to the measure~$\mu_t\, dt$:
$$
a^{ij}, b^i\in L^1_{loc} (\mu_t\, dt).
$$
The measure $\mu=\mu_t\, dt$ on $[0,T]\times\mathbb{R}^d$ is defined as usual by the equality
$$
\int f\, d\mu=\int_0^T \int_{\mathbb{R}^d} f(t,x)\, \mu_t(dx)\, dt
$$
This measure can be identified with the solution, which is also denoted by $\{\mu_t\}$.

Recall that the weak topology on $\mathcal{P}(\mathbb{R}^d)$ is generated by the seminorms
$$
\sigma\mapsto \biggl|\int_{\mathbb{R}^d} f(x)\, \sigma(dx)\biggr|
$$
on the linear space of all bounded Borel measures, where $f$ is a bounded continuous function
on~$\mathbb{R}^d$ (see~\cite{B18book}).
Recent accounts on the theory of Fokker--Planck--Kolmogorov equations can be found
in \cite{BKR-umn} and \cite{book}.
The inner product and norm on $\mathbb{R}^d$ are denoted by $\langle x,y\rangle$ and~$|x|$,
respectively. The operator norm of a matrix $A$ is denoted by~$\|A\|$.

In the case $A=0$ (the continuity equation) the following
superposition principle
of Ambrosio \cite{A2008} is known  (see also~\cite{A17},
\cite{AT}, and~\cite{ST}). If $\mu=\mu_t\, dt$ with probability
measures~$\mu_t$ on~$\mathbb{R}^d$ satisfies  the continuity equation
$$
\partial_t\mu_t+{\rm div}(b\mu_t)=0
$$
 and
$|b(x,t)|/(1+|x|)$ is $\mu$-integrable, then there exists a nonnegative
Borel measure $\eta$ on the space $\mathbb{R}^d\times C([0,T],\mathbb{R}^d)$
concentrated on the set of pairs $(x,\omega)$ such
that $\omega$ is an absolutely continuous solution of the integral
equation
$$
\omega(t)=x+\int_0^t b(\omega(s),s)\, ds
$$
and, for each function $\varphi\in C_b(\mathbb{R}^d)$ and each $t\in [0,T]$, one has
$$
\int \varphi(x)\, \mu_t(dx)=\int \varphi(\omega(t))\, \eta (dx d\omega).
$$
In other words, the measure $\mu_t$ coincides with the image of $\eta$
under the evaluation mapping $(x,\omega)\mapsto \omega(t)$.
Of course, the integral on the right coincides with the integral against the projection
of $\eta$ on~$C([0,T],\mathbb{R}^d)$ (see \cite[Remark~3.1]{A2008} about the connection
between the two measures). A discussion of analogous representations for signed solutions
and interesting counter-examples can be found in~\cite{GM}.

The case of possibly nonzero $A$ and bounded~$A$ and~$b$
was first considered by Figalli~\cite{F}, who proved that every probability
solution to the Cauchy problem
for the Fokker--Planck--Kolmgorov equation is represented by a martingale measure
on the path space.
Generalizing this seminal achievement,
Trevisan  \cite{TREV} obtained the following important and very general result.

Suppose that a mapping $t\mapsto \mu_t$ from $[0, T]$ to the space
of probability measures $\mathcal{P}(\mathbb{R}^d)$ is  continuous with respect to the weak topology
and satisfies the  Cauchy problem (\ref{z1}).
Suppose also that it satisfies the condition
\begin{equation}\label{con-tr}
\int_0^T\int_{\mathbb{R}^d}\bigl[\|A(t, x)\|+|b(t, x)|\bigr]
\mu_t(dx)\,dt<\infty.
\end{equation}
Then there exists a Borel probability measure $P_{\nu}$ on the path space
$$
\Omega_d:=C([0, T],\mathbb{R}^d)
$$
 of continuous functions $\omega\colon [0,T]\to \mathbb{R}^d$
with its standard sup-norm $\|\omega\|=\sup_t |\omega(t)|$  such  that

\, (i) \, $P_{\nu}\bigl(\omega\colon \omega(0)\in B\bigr)=\nu(B)$ for all Borel
sets $B\subset\mathbb{R}^d$,

\, (ii) \, for every function $f\in C_0^{\infty}(\mathbb{R}^d)$, the function
 $$
 (\omega, t)\mapsto f(\omega(t))-f(\omega(0))-\int_0^tLf(s,\omega(s))\,ds
 $$
is a martingale with respect to the measure $P_{\nu}$ and the natural filtration
$\mathcal{F}_t=\sigma(\omega(s), s\in[0, t])$,

\, (iii) \, for every function $f\in C_0^{\infty}(\mathbb{R}^d)$, there holds the equality
$$
\int_{\mathbb{R}^d} f\,d\mu_t=\int_{\Omega_d}f(\omega(t))\,P_{\nu}(d\omega)
\quad \forall\, t\in [0,T].
$$

The latter means that $\mu_t$ is the law of $\omega(t)$ under~$P_\nu$, while (i) means
that $\nu$ is the law of~$\omega(0)$.

In spite of a  very general character of condition (\ref{con-tr}), in  many
simple situations it is not fulfilled. Let us consider a one-dimensional example. Let
$$
\varrho\in C^{\infty}(\mathbb{R}), \quad \varrho>0, \quad
\int\varrho(x)\,dx=1, \quad b(x)=\varrho'(x)/\varrho(x).
$$
Then $\mu_t(dx)=\mu(dx)=\varrho\,dx$ is a stationary solution to the
Fokker--Planck--Kol\-mo\-go\-rov equation
$$\partial_t\mu=\mu''-(b\mu)'.
$$
In particular, $\mu_t=\mu$ satisfies the  Cauchy problem with initial data $\mu$.
However, it is easy to find a smooth probability density $\varrho$ such that
$$
\int_{\mathbb{R}}|b(x)|\varrho(x)\,dx
=\int_{\mathbb{R}}|\varrho'(x)|\,dx=\infty.
$$

In this paper we reinforce the aforementioned result by replacing condition (\ref{con-tr})
with a weaker assumption.

Throughout  we assume  that the coefficients
are Borel measurable on $[0, T]\times\mathbb{R}^d$,
$$
a^{ij}, b^i\in L^1([0, T]\times U, \mu_t\,dt)
$$
for every ball~$U$ in~$\mathbb{R}^d$,
and the following condition is fulfilled:
\begin{equation}\label{con-new}
\int_0^{T}\int_{\mathbb{R}^d}
\frac{\|A(t, x)\|+|\langle b(t, x), x\rangle|}{(1+|x|)^{2}}
\,\mu_t(dx)\,dt<\infty.
\end{equation}

It follows from Proposition \ref{pr1} below (see also Example~\ref{ex1}) that in order to ensure
condition (\ref{con-new}) it suffices that $\log(1+|x|)\in L^1(\nu)$ and
$$
\|A(t, x)\|\le C+C|x|^2\log(1+|x|), \quad \langle b(t, x), x\rangle
\le C+C|x|^2\log(1+|x|).
$$
Obviously, it is also sufficient without any assumptions about $\nu$ that
$$
\|A(t, x)\|+|\langle b(t, x), x\rangle|\le C+C|x|^2.
$$

Our main result is the following theorem (its proof is given in the last section).

\begin{theorem}\label{tmain}
Suppose that $\{\mu_t\}$ is a solution to the  Cauchy problem $\partial_t\mu_t=L^{*}\mu_t$
on $[0, T]$ with $\mu_0=\nu$ and  {\rm (\ref{con-new})} is fulfilled. Then there exists a Borel probability
measure $P_{\nu}$ on $\Omega_d=C([0, T], \mathbb{R}^d)$ for which all assertions
{\rm (i)}, {\rm (ii)} and {\rm (iii)} are true.
\end{theorem}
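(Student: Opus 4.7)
The hypothesis (\ref{con-new}) is tailored so that $V(x)=\log(1+|x|^2)$ is a Lyapunov function for $L$: direct computation yields
$$
LV=\frac{2\,\operatorname{tr} A}{1+|x|^2}-\frac{4\langle Ax,x\rangle}{(1+|x|^2)^2}+\frac{2\langle b,x\rangle}{1+|x|^2},
$$
so $|LV|\le C_d(\|A\|+|\langle b,x\rangle|)/(1+|x|)^2$, i.e.\ (\ref{con-new}) is precisely the statement $LV\in L^1(\mu_t\,dt)$. Since $\nabla V$ is radial, only $\langle b,x\rangle$ enters $LV$, matching the absence of the full $|b|$ in the hypothesis; this single observation drives the whole argument.

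I would construct $P_\nu$ by spatial mollification and a compactness argument. Let $\rho_\epsilon$ be a symmetric mollifier and set $\mu_t^\epsilon=\mu_t*\rho_\epsilon$, $\nu^\epsilon=\nu*\rho_\epsilon$. Then $\mu_t^\epsilon$ has smooth strictly positive density and, by convolving the FPK equation, satisfies $\partial_t\mu_t^\epsilon=L_\epsilon^*\mu_t^\epsilon$ with smooth, locally bounded coefficients
$$
a_\epsilon^{ij}=\frac{(a^{ij}\mu_t)*\rho_\epsilon}{\mu_t^\epsilon},\qquad b_\epsilon^i=\frac{(b^i\mu_t)*\rho_\epsilon}{\mu_t^\epsilon},\qquad A_\epsilon\ge 0.
$$
In this smooth, locally bounded category Trevisan's theorem applies after an auxiliary cutoff at $B_R$ (removed by sending $R\to\infty$, with the Lyapunov bound below preventing explosion), yielding a probability measure $P^\epsilon$ on $\Omega_d$ with initial law $\nu^\epsilon$, time-$t$ marginals $\mu_t^\epsilon$, and the martingale property for $L_\epsilon$.

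The technical fulcrum is the adjoint identity
$$
\int L_\epsilon f\,d\mu_t^\epsilon=\int L(f*\rho_\epsilon)\,d\mu_t,
$$
valid for smooth $f$ with sufficient decay, by the definition of $a_\epsilon^{ij},b_\epsilon^i$ together with the symmetry of $\rho_\epsilon$. Applied to (a cutoff of) $V$ it produces the uniform-in-$\epsilon$ estimate
$$
\int_0^T\!\!\int|L_\epsilon V|\,d\mu_t^\epsilon\,dt\le C\int_0^T\!\!\int\frac{\|A\|+|\langle b,x\rangle|}{(1+|x|)^2}\,d\mu_t\,dt<\infty,
$$
exploiting that $V*\rho_\epsilon$ inherits the gradient and Hessian decay of $V$. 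Doob's maximal inequality applied to the $P^\epsilon$-martingale $V(\omega(t))-V(\omega(0))-\int_0^t L_\epsilon V(s,\omega(s))\,ds$ then gives $\sup_\epsilon\mathbb{E}^{P^\epsilon}[\sup_{t\le T}V(\omega(t))]<\infty$, confining paths to a common compact set with high probability; combined with Kolmogorov--Centsov equicontinuity inside these compacts (on which $A_\epsilon,b_\epsilon$ are uniformly $L^\infty$ by Young's inequality applied to the mollification), this yields tightness of $\{P^\epsilon\}$ on $\Omega_d$. Extract a weak subsequential limit $P_\nu$: properties (i) and (iii) follow from $\nu^\epsilon\to\nu$ and $\mu_t^\epsilon\to\mu_t$ weakly, respectively, while (ii) is obtained by passing to the limit in $\mathbb{E}^{P^\epsilon}[(f(\omega(t))-f(\omega(s))-\int_s^t L_\epsilon f(r,\omega(r))\,dr)g]=0$ for $f\in C_0^\infty$ and bounded continuous cylindrical $g$, using the adjoint identity once more: the compact support of $f$ combined with the local integrability of $a^{ij},b^i$ against $\mu_t\,dt$ forces $\int L_\epsilon f\,d\mu_r^\epsilon=\int L(f*\rho_\epsilon)\,d\mu_r\to\int Lf\,d\mu_r$.

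\textbf{Main obstacle.} The main technical difficulty is to make the uniform Lyapunov estimate rigorous. Because $V$ is unbounded, the adjoint identity and It\^o's formula must be localized by smooth cutoffs $\chi_M(|x|)$ and by stopping at exit times $\tau_R=\inf\{t:|\omega(t)|\ge R\}$, and the boundary contributions from $L(\chi_M V)$ and from the truncated martingale increments must be controlled uniformly in $M$, $R$, and $\epsilon$. Reconciling these nested limits---in particular in the one-sided form needed for the corollary on $\langle b,x\rangle\le C+C|x|^2\log|x|$---requires careful book-keeping of error terms. Once the uniform Lyapunov bound is in hand, the remaining weak-convergence arguments are essentially standard.
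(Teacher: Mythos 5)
Your high-level outline (mollify, identify a Lyapunov function tailored to~(\ref{con-new}), get tightness from Doob's inequality, pass to the limit) is in the same spirit as the paper, and your opening observation that $V(x)=\log(1+|x|^2)$ has radial gradient, so that only $\langle b,x\rangle$ enters $LV$, is exactly right. However, there are two genuine gaps in the argument as you sketch it.

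\textbf{1. Identification of the marginals with $\mu_t^\varepsilon$ needs a uniqueness theorem, which you do not supply.} You write that Trevisan's theorem, after a cutoff on $B_R$ and then sending $R\to\infty$, yields a measure $P^\varepsilon$ ``with time-$t$ marginals $\mu_t^\varepsilon$.'' What existence results give you is a martingale measure whose marginals form \emph{some} subprobability solution of $\partial_t\sigma=L_\varepsilon^*\sigma$ with initial law $\nu^\varepsilon$. To conclude that this solution equals your $\mu_t^\varepsilon$, you must know that the mollified Cauchy problem has a \emph{unique} subprobability solution. This is far from automatic, even with smooth coefficients, and indeed is the first of the two difficulties the paper explicitly flags. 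The paper resolves it by invoking a uniqueness theorem of Manita--Shaposhnikov, whose hypothesis is precisely the Lyapunov integrability estimate being established, and by a careful limiting argument with cutoff operators $\varphi_N\mathcal{L}_\varepsilon$ where the limit solution is identified by that uniqueness result. Without this, your ``removed by sending $R\to\infty$'' could in principle converge to a different, exploding solution.

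\textbf{2. Your adjoint identity does not yield the uniform Lyapunov bound you claim.} The identity
$\int L_\varepsilon f\,d\mu_t^\varepsilon=\int L(f*\rho_\varepsilon)\,d\mu_t$
is correct (for symmetric $\rho_\varepsilon$ and $f$ with compactly supported derivatives), but it is a \emph{signed} equality. To run a Gronwall/Proposition-type argument or to control $\sup_t|\int_0^t L_\varepsilon V(s,\omega(s))\,ds|$ in Doob's inequality you need a \emph{pointwise one-sided} bound $(L_\varepsilon V)^+\le W_\varepsilon+CV$ with $\int W_\varepsilon\,d\mu_t^\varepsilon\,dt$ uniformly bounded. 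Writing out the mollified drift term you get
\[
\frac{\langle b_\varepsilon(t,x),x\rangle}{1+|x|^2}
=\frac{1}{\mu_t^\varepsilon(x)}\int\frac{\langle b(t,y),y\rangle}{1+|x|^2}\rho_\varepsilon(x-y)\,\mu_t(dy)
+\frac{1}{\mu_t^\varepsilon(x)}\int\frac{\langle b(t,y),x-y\rangle}{1+|x|^2}\rho_\varepsilon(x-y)\,\mu_t(dy),
\]
and while the first summand is controlled by $|\langle b,y\rangle|/(1+|y|^2)$, the second summand is of size $\varepsilon|b(t,y)|$ on the mollification scale, which is \emph{not} controlled by hypothesis~(\ref{con-new}) (this is exactly the point the paper is designed to work around). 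This is the second difficulty the paper flags (``for a priori estimates it is necessary to keep condition~(\ref{con-new}) uniformly''), and its resolution is a genuine new idea: one rewrites $\langle b(t,y),x-y\rangle\rho_\varepsilon(x-y)$ as a gradient of a primitive of $\rho_\varepsilon$ and then uses the Fokker--Planck equation \emph{itself} to trade the resulting $\partial_{x_i}(b^i\mu)$ term for $\partial_t\mu$ plus a diffusion term, both of which are controllable. Your sketch acknowledges a ``bookkeeping'' obstacle but does not contain this trick, and without it the uniform bound fails for the bad half of the drift.

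A side remark: the paper also mollifies in $t$ (hence the time shift $\mu_{t+\delta}$ and the work on a slightly larger interval followed by a patching step), and adds an $\varepsilon\gamma$ Gaussian to obtain a strictly positive density and a nondegenerate diffusion matrix; these technical devices are needed to bring the approximating problem into the regularity class for which the cited uniqueness result applies. Your space-only mollification, had it worked, would have been simpler, but as it stands it leaves the two gaps above open.
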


It is important that our theorem assumes no uniqueness of probability solutions to the Cauchy problem,
the martingale representation exists for each probability  solution
satisfying~(\ref{con-new}).

It should be noted that the superposition principle does not work without
global assumptions even for smooth coefficients and $A=I$, because it can happen
that there are many probability solutions to the Fokker--Planck--Kolmogorov equation
(see \cite[Section~9.2]{book},
while the martingale  problem has a unique solution in this case
(see \cite[Corollary~10.1.2]{StrV}) and this solution necessarily
corresponds to  a subprobability solution to the FPK equation due to a blow up.

Note that the integrability of $(1+|x|)^{-2}|\langle b(t, x), x\rangle|$
can hold even in the case where the
function $(1+|x|)^{-1}|b(t, x)|$ is not integrable  with respect to the solution
(see Example~\ref{ex2}).

Not only is the assumption of integrability of $(1+|x|)^{-2}|\langle b(t, x),
x\rangle|$ weaker than the
assumption of integrability of $(1+|x|)^{-1}|b(t,x)|$, but it is also simpler to verify.
For example, as already noted above, if $\log (1+|x|)$ is $\nu$-integrable,
it suffices to have a one-sided
bound.

\begin{corollary}
Let $\log(1+|x|^2)\in L^1(\nu)$ and
$$
\|A(t,x)\|\le C+C|x|^2\log(1+|x|^2),
\quad
\langle b(t,x), x\rangle \le C+C|x|^2\log(1+|x|^2).
$$
Then the hypotheses of the main theorem are fulfilled, hence its conclusion holds.
\end{corollary}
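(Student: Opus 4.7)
The plan is to verify that the hypotheses of the corollary imply condition (\ref{con-new}), after which Theorem \ref{tmain} applies directly. The natural Lyapunov-type test function is
$$
\varphi(x)=\log(1+|x|^2),
$$
which is smooth, has bounded gradient, and whose second derivatives decay at infinity. A direct computation gives
$$
L\varphi=\frac{2\,\mathrm{tr}(A)}{1+|x|^2}-\frac{4\langle Ax,x\rangle}{(1+|x|^2)^2}+\frac{2\langle b,x\rangle}{1+|x|^2}.
$$
Since $A\ge 0$ the middle term is nonpositive, so $\mathrm{tr}(A)\le d\|A\|$ and the one-sided hypotheses on $\|A\|$ and $\langle b,x\rangle$ yield the Lyapunov inequality
$$
L\varphi\le C_1\bigl(1+\log(1+|x|^2)\bigr)=C_1(1+\varphi)
$$
for some $C_1>0$ depending only on $C$ and $d$.

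The first real step is to establish the a priori bound $\sup_{t\in[0,T]}\int\varphi\,d\mu_t<\infty$. Because $\varphi\notin C_0^\infty$, I cannot insert it directly into the definition of a solution; I would instead use cut-offs $\varphi_R(x)=\varphi(x)\chi(|x|/R)$ with $\chi\in C_c^\infty([0,\infty))$, $\chi=1$ on $[0,1]$. Applying the FPK equation to $\varphi_R$, the extra commutator terms $\varphi L\chi(|x|/R)+2a^{ij}\partial_i\varphi\,\partial_j\chi(|x|/R)$ are supported in $\{R\le|x|\le 2R\}$ and, owing to the decay of $\nabla\varphi$, $D^2\varphi$ together with the local $\mu_t\,dt$-integrability of $A,b$, they can be shown to vanish as $R\to\infty$; this is the step where most of the technical care is needed. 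On the remaining part I would apply the Lyapunov bound, pass to the limit (monotone/dominated convergence on $\int\varphi_R\,d\mu_t\uparrow\int\varphi\,d\mu_t$), and obtain
$$
\int\varphi\,d\mu_t\le \int\varphi\,d\nu+C_1 T+C_1\int_0^t\!\!\int\varphi\,d\mu_s\,ds,
$$
after which Gronwall and the integrability hypothesis $\log(1+|x|^2)\in L^1(\nu)$ give the desired uniform bound.

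With this bound in hand, condition (\ref{con-new}) is a short exercise. The $\|A\|$ contribution is immediate from $\|A(t,x)\|/(1+|x|)^2\le C'(1+\varphi(x))$; the same bound controls the positive part $\langle b,x\rangle_+/(1+|x|)^2$. For the negative part I would rerun the cut-off argument as a genuine equality rather than inequality: the FPK identity for $\varphi$ yields
$$
\int_0^T\!\!\int\frac{2\langle b,x\rangle}{1+|x|^2}\,d\mu_s\,ds=\int\varphi\,d\mu_T-\int\varphi\,d\nu+\int_0^T\!\!\int\Bigl[\frac{4\langle Ax,x\rangle}{(1+|x|^2)^2}-\frac{2\,\mathrm{tr}(A)}{1+|x|^2}\Bigr]\,d\mu_s\,ds,
$$
whose right-hand side is absolutely controlled by $\sup_t\int\varphi\,d\mu_t$ and the already verified $\|A\|$-integral. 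Hence $\int\langle b,x\rangle/(1+|x|^2)\,d\mu_s\,ds$ is finite, and subtracting it from the bound on the positive part yields finiteness of the negative part, which completes the verification of (\ref{con-new}).

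The principal obstacle is Step 1: making the Lyapunov/cut-off argument rigorous in the absence of any a priori global integrability of $A$ or $b$ against $\mu_t\,dt$. One must use only the local integrability provided in the definition of solution, together with the pointwise decay of the derivatives of $\varphi$, to show that all cut-off commutators disappear in the limit; this is the one place where the proof is not purely formal.
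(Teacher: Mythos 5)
Your overall plan coincides with the paper's: take $V(x)=\log(1+|x|^2)$, derive the Lyapunov inequality $LV\le C_1+C_1V$, get a uniform bound on $\int V\,d\mu_t$ via Gronwall, and then deduce condition~(\ref{con-new}). The paper does exactly this via Example~\ref{ex1}, which invokes Proposition~\ref{pr1}. However, the specific mechanism you propose for the cut-off step has a genuine gap, and it is precisely the step you yourself flag as delicate.

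You cut off multiplicatively: $\varphi_R=\varphi\cdot\chi(|x|/R)$. The commutator terms
$\varphi\,L\chi(|x|/R)+2a^{ij}\partial_i\varphi\,\partial_j\chi(|x|/R)$
cannot be shown to vanish under the stated hypotheses. On the annulus $\{R\le|x|\le 2R\}$ one has $\partial_j\chi(|x|/R)=O(1/R)$, $\partial_i\partial_j\chi(|x|/R)=O(1/R^2)$, $\partial_i\varphi=O(1/R)$ and $\varphi\approx 2\log R$, so the $A$-terms are pointwise of size $\|A\|(\log R)/R^2$, which under $\|A\|\le C+C|x|^2\log(1+|x|^2)$ is of order $(\log R)^2$: it does not decay. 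Worse, the drift commutator equals $\varphi\cdot\chi'(|x|/R)\langle b,x\rangle/(R|x|)$; since $\chi'\le 0$, an upper bound on this requires a \emph{lower} bound on $\langle b,x\rangle$, which the one-sided hypothesis does not give (and the integrability hypothesis gives nothing either, since $b$ is only \emph{locally} $\mu_t\,dt$-integrable, a bound that does not decay — and typically grows — on annuli $\{R\le|x|\le 2R\}$ as $R\to\infty$). Arguing instead that the $\mu_s\,ds$-integral of these commutators over the annulus tends to zero would require exactly the global tail control you are trying to establish — a circular argument.

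The paper avoids all of this by truncating in the \emph{range} rather than in the domain: it tests with $\zeta_N\circ V$, where $\zeta_N$ is concave, $0\le\zeta_N'\le1$, $\zeta_N''\le0$, $\zeta_N(t)=t$ for $t\le N-1$ and $\zeta_N(t)=N$ for $t\ge N+1$. Since $V\to\infty$, $\zeta_N(V)-N$ is $C^\infty$ with compact support, so it is an admissible test function. The chain rule gives $L\zeta_N(V)=\zeta_N'(V)LV+\zeta_N''(V)|\sqrt{A}\nabla V|^2\le\zeta_N'(V)LV$, because the extra term has a \emph{definite sign}: $\zeta_N''\le0$ and $|\sqrt{A}\nabla V|^2\ge0$. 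There are no uncontrolled commutators, and the Gronwall bound plus the integrability of $|LV|$ against $\mu_t\,dt$ drop out cleanly (Proposition~\ref{pr1}). That integrability of $|LV|$ then gives the negative part of $\langle b,x\rangle/(1+|x|^2)$ directly, so your extra detour through the FPK identity as an equality is unnecessary. In short: the idea is right, but replace the multiplicative spatial cut-off by composition with a concave truncation of the Lyapunov function.
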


For the proof, see Example~\ref{ex1}.
Such a bound allows  coercive drift coefficients typical in the theory of diffusion
processes and is a simple algebraic condition, a verification of which does not use any information about
the unknown  solution to the Cauchy problem. Note that if no information about the solution
$\{\mu_t\}$ is given, then our result applies to $A$ and $b$ of linear growth, more
precisely, the function $\|A(t,x)\|+|\langle b(t,x),x\rangle |$
can be of quadratic growth.
Note also that the superposition principle holds for nonlinear equations
(see Remark~\ref{re3.4}).

\section{Auxiliary results}

For the proof of the main result we need some auxiliary assertions.
The next lemma is a simple consequence of the fact that $\mu_t$ is the law
of $\omega(t)$ under~$P_\nu$, but it will be applied repeatedly below.

\begin{lemma}\label{lem1}
Let $g\ge 0$ be a bounded Borel function on $\Omega_d$ and let $f\ge 0$ be a Borel function
on $\mathbb{R}^d$ integrable with respect to the measure $\mu_t$ for some $t\in (0,T]$.
If $P_\nu$ is a probability measure on $\Omega_d$ with property~{\rm(iii)} above,
then
\begin{equation}\label{ek1}
\int_{\Omega_d} f(\omega(t)) g(\omega)\, P_\nu(d\omega)
\le \sup_\omega g(\omega)\int_{\mathbb{R}^d} f(x)\, \mu_t(dx).
\end{equation}
\end{lemma}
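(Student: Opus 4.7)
The plan is to use the bound $g \le M := \sup_\omega g(\omega)$ pointwise to reduce the inequality to the assertion
\[
\int_{\Omega_d} f(\omega(t))\, P_\nu(d\omega) = \int_{\mathbb{R}^d} f(x)\, \mu_t(dx),
\]
i.e.\ to the statement that $\mu_t$ is the image of $P_\nu$ under the evaluation map $\omega \mapsto \omega(t)$, valid for all nonnegative Borel $f$ that are $\mu_t$-integrable (with the convention that both sides may equal $+\infty$ simultaneously if we ever need it; here they are finite by hypothesis). The lemma then follows immediately by multiplying the resulting identity by~$M$.

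The work is therefore to upgrade property (iii), which a priori is stated only for $f\in C_0^\infty(\mathbb{R}^d)$, to all nonnegative Borel $f$. I would do this by a standard monotone-class / approximation argument in several steps. First, given any $f\in C_c(\mathbb{R}^d)$, regularize via mollification by $f_\varepsilon \in C_0^\infty(\mathbb{R}^d)$; since $f_\varepsilon \to f$ uniformly and all $f_\varepsilon$ are supported in a common compact set, bounded convergence on both sides of (iii) gives the identity for $f \in C_c(\mathbb{R}^d)$. Next, for $f \in C_b(\mathbb{R}^d)$ nonnegative, multiply by smooth cutoffs $\chi_n \in C_c(\mathbb{R}^d)$ with $0 \le \chi_n \uparrow 1$; monotone convergence on both sides yields the identity for all bounded continuous nonnegative~$f$. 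Approximating $\mathbf{1}_U$ from below by bounded continuous functions for each open $U\subset\mathbb{R}^d$ and invoking the Dynkin $\pi$--$\lambda$ theorem (the open sets form a $\pi$-system generating the Borel $\sigma$-algebra) extends the identity to indicators of arbitrary Borel sets, hence by linearity to all nonnegative simple Borel functions, and finally by monotone convergence to all nonnegative Borel~$f$. In particular, the identity holds for the $\mu_t$-integrable $f$ of the statement, and both sides are finite.

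There is no real obstacle here beyond book-keeping; the only point requiring any care is that the test functions in (iii) lie in $C_0^\infty(\mathbb{R}^d)$ rather than $C_b(\mathbb{R}^d)$, which is precisely what the mollification + cutoff step above handles. Once the evaluation-image identity is known for general nonnegative Borel $f$, the bound
\[
\int_{\Omega_d} f(\omega(t))\,g(\omega)\, P_\nu(d\omega)
\le M \int_{\Omega_d} f(\omega(t))\, P_\nu(d\omega)
= M \int_{\mathbb{R}^d} f(x)\,\mu_t(dx)
\]
is immediate, which is (\ref{ek1}).
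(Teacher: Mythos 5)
Your proof is correct and follows essentially the paper's intended approach: the paper gives no explicit proof, remarking only that the lemma is ``a simple consequence of the fact that $\mu_t$ is the law of $\omega(t)$ under $P_\nu$,'' which is precisely the image-measure identity you establish before applying the pointwise bound $g\le M$. Your extension of property (iii) from $C_0^\infty(\mathbb{R}^d)$ to all nonnegative Borel $f$ via mollification, cutoff, open sets, and the $\pi$--$\lambda$ theorem is exactly the standard measure-theoretic argument the paper implicitly invokes, and it is carried out correctly.
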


The following proposition not only provides an important a priori estimate in the spirit of classical Lyapunov functions
(see \cite{book}, where a variety  of similar results can be found), but also contains an interesting new result: the integrability
of $|LV|$ with respect to the solution.

\begin{proposition}\label{pr1}
Suppose that $\{\mu_t\}$ is a solution of the Cauchy problem $\partial_t\mu_t=L^{*}\mu_t$ with
$\mu_0=\nu$ and there exists a nonnegative function $V\in C^2(\mathbb{R}^d)$ along with a
measurable nonnegative function $W$ such that $V\in L^1(\nu)$
and for some numbers $C\ge 0$ and $\tau\in(0, T]$ one has
$$
\lim\limits_{|x|\to+\infty}V(x)=+\infty, \quad LV(t, x)\le W(t, x)+CV(x), \quad
\int_0^{\tau}\int_{\mathbb{R}^d} W(t, x)\, \mu_t(dx)\,dt<\infty.
$$
Then
$$
\int_{\mathbb{R}^d} V\, d\mu_t\le \biggl(\int_{\mathbb{R}^d} V\,d\nu+\int_0^{\tau}\int_{\mathbb{R}^d} W\,d\mu_s\,ds
\biggr)e^{Ct} \quad \forall t\in[0, \tau],
$$
$$
\int_0^{\tau}\int_{\mathbb{R}^d} |LV|\,d\mu_t\,dt\le 2e^{C\tau}
\biggl(\int_0^\tau\int_{\mathbb{R}^d} W\,d\mu_s\,ds+\int_{\mathbb{R}^d} V\,d\nu\biggr).
$$
\end{proposition}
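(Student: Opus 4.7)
The strategy is to approximate $V$ by bounded $C^2$ concave cutoffs $V_n$, apply the weak formulation of the Fokker--Planck--Kolmogorov equation to $V_n$, use Gronwall's lemma to obtain the first bound, and deduce the second bound via the identity $|LV|=2(LV)^+-LV$ combined with the hypothesis $LV\le W+CV$.

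I fix a concave $C^2$ function $\eta_n\colon[0,\infty)\to[0,\infty)$ with $\eta_n(s)=s$ for $s\le n$, $\eta_n$ bounded, $\eta_n'\ge 0$, $\eta_n''\le 0$, and $\eta_n'(s)\,s\le (1+\delta_n)\eta_n(s)$ with $\delta_n\to 0$ (easily arranged by stretching the transition region, e.g.\ of length $\sqrt n$). Put $V_n=\eta_n\circ V$. Since $V(x)\to\infty$, $V_n$ is constant outside a bounded set, so $V_n$ minus that constant lies in $C^2_c(\mathbb{R}^d)$; mollification combined with $a^{ij},b^i\in L^1_{\rm loc}(\mu_t\,dt)$ extends the weak formulation to $V_n$ and yields
\begin{equation*}
\int_{\mathbb{R}^d}V_n\,d\mu_t=\int_{\mathbb{R}^d}V_n\,d\nu+\int_0^t\!\int_{\mathbb{R}^d}LV_n\,d\mu_s\,ds.
\end{equation*}
The chain rule gives $LV_n=\eta_n'(V)LV+\eta_n''(V)\langle A\nabla V,\nabla V\rangle\le \eta_n'(V)(W+CV)\le W+(1+\delta_n)CV_n$, where the first inequality uses $A\ge 0$ and $\eta_n''\le 0$, and the second uses the hypothesis $LV\le W+CV$ together with the choice of $\eta_n$. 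Gronwall's inequality then produces $\int V_n\,d\mu_t\le \bigl(\int V_n\,d\nu+\int_0^\tau\!\int W\,d\mu_s\,ds\bigr)e^{(1+\delta_n)Ct}$, and monotone convergence with $\delta_n\to 0$ delivers the first bound.

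For the second bound, I write
\begin{equation*}
\int_0^\tau\!\int|LV_n|\,d\mu_s\,ds=2\int_0^\tau\!\int(LV_n)^+\,d\mu_s\,ds-\int_0^\tau\!\int LV_n\,d\mu_s\,ds;
\end{equation*}
then $(LV_n)^+\le W+CV$ bounds the first term by $2\int_0^\tau\!\int(W+CV)\,d\mu_s\,ds$, while the tested identity with $V_n\ge 0$ yields $-\int_0^\tau\!\int LV_n\,d\mu_s\,ds=\int V_n\,d\nu-\int V_n\,d\mu_\tau\le\int V_n\,d\nu$. Integrating the first bound over $s\in[0,\tau]$ gives $2C\int_0^\tau\!\int V\,d\mu_s\,ds\le 2(e^{C\tau}-1)\bigl(\int V\,d\nu+\int_0^\tau\!\int W\,d\mu_s\,ds\bigr)$; collecting terms produces exactly $2e^{C\tau}\bigl(\int V\,d\nu+\int_0^\tau\!\int W\,d\mu_s\,ds\bigr)-\int V\,d\nu$, which is at most the claimed bound, and Fatou's lemma as $n\to\infty$ transfers the estimate from $\int|LV_n|$ to $\int|LV|$. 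The main obstacle is the rigorous justification of the tested identity for $V_n$: after mollifying, a smooth spatial cutoff $\chi_R$ at radius $R$ produces genuine $C_0^\infty$ approximants, and one must verify that the commutator terms $V_n L\chi_R$ and $2a^{ij}\partial_iV_n\partial_j\chi_R$ supported on the annulus $\{R\le|x|\le 2R\}$ vanish after integration as $R\to\infty$; this is where $a^{ij},b^i\in L^1_{\rm loc}(\mu_t\,dt)$ is used, together with the fact that $V_n$ is constant on that annulus once $R$ is large (so in particular $\nabla V_n$ vanishes there). A secondary point is arranging $\delta_n\to 0$ to recover the sharp constant $C$ in the exponent rather than some $C'>C$.
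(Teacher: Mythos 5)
Your proof is correct and follows essentially the same route as the paper: approximate $V$ by concave $C^2$ truncations, use the extended weak formulation (justified via mollification and the local integrability of $a^{ij},b^i$), apply Gronwall, and then split $|LV_n|$ into its positive and negative parts to get the second estimate — the only difference being that you write $|LV_n|=2(LV_n)^+-LV_n$ where the paper bounds $(LV)^+$ and $(LV)^-$ separately, which is algebraically equivalent. One simplification: the parameter $\delta_n$ is unnecessary, since any concave $\eta_n$ with $\eta_n(0)=0$ automatically satisfies $\eta_n'(s)\,s\le\eta_n(s)$ for $s\ge 0$, so one can take $\delta_n=0$ from the start and avoid the limiting argument in the exponent.
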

\begin{proof}
Let $\zeta_N\in C_b^\infty(\mathbb{R})$ be such
that $\zeta_N(t)=t$ if $t\le N-1$, $\zeta_N(t)=N$ if $t>N+1$ and $0\le \zeta_N'\le 1$, $\zeta_N''\le 0$.
In the proof we omit indication of $\mathbb{R}^d$ in integration over the whole space.
Since
$$L\zeta_N(V)=\zeta'_N(V)L(V)+\zeta''_N(V)|\sqrt{A}\nabla V|^2\le \zeta_N'(V)LV,
$$
there holds the inequality
$$
\int\zeta_N(V)\,d\mu_t\le \int\zeta_N(V)\,d\nu+\int_0^t\int
\zeta_N'(V)LV\,d\mu_s\,ds.
$$
Therefore,
$$
\int\zeta_N(V)\,d\mu_t\le\int\zeta_N(V)\,d\nu+\int_0^t\int \zeta_N'(V)
(W+CV)\,d\mu_s\,ds
$$
and
$$
\int\zeta_N(V)\,d\mu_t\le\int V\,d\nu+\int_0^\tau\int W\,d\mu_s\,ds+
C\int_0^t\int \zeta_N(V)\,d\mu_s\,ds.
$$
The announced bound on the integral of $V$ against $\mu_t$ is obtained with the aid
of Gronwall's  inequality by passing to the limit as $N\to\infty$.

Next we write the first inequality in the following form:
$$
\int\zeta_N(V)\,d\mu_t+\int_0^t\int \zeta_N'(V)(LV)^{-}\,d\mu_s\,ds
\le \int\zeta_N(V)\,d\nu
+\int_0^t\int \zeta_N'(V)(LV)^{+}\,d\mu_s\,ds,
$$
where $(LV)^{+}=\max\{LV, 0\}$, $(LV)^{-}=\max\{-LV, 0\}$ and $LV=(LV)^{+}
-(LV)^{-}$.

Since $(LV)^{+}\le W+CV$, we have
$$
\int\zeta_N(V)\,d\mu_t+\int_0^t\int \zeta_N'(V)(LV)^{-}\,d\mu_s\,ds\le
\int\zeta_N(V)\,d\nu
+\int_0^t\int W+CV\,d\mu_s\,ds.
$$
On account of the obtained estimate on $V$ we arrive at the inequality
$$
\int_0^{\tau}\int (LV)^{-}\,d\mu_s\,ds\le e^{C\tau}\biggl(\int_0^{\tau}
\int W\,d\mu_s\,ds+\int V\,d\nu\biggr),
$$
 which yields the announced estimate on the integral of $|LV|=(LV)^{+}+(LV)^{-}$.
\end{proof}

\begin{example}\label{ex1}
\rm
If $\log(1+|x|^2)\in L^1(\nu)$
and
$$
\|A(t,x)\|\le C+C|x|^2\log(1+|x|^2),
\quad
\langle b(t,x), x\rangle \le C+C|x|^2\log(1+|x|^2),
$$
then for some number $C_1$ we have
$$L(\log(1+|x|^2))\le C_1+C_1\log(1+|x|^2),
\quad
\|A(t,x)\|/(1+|x|^2)\le C_1+C_1\log(1+|x|^2),
$$
$$
|\langle b(t,x), x\rangle |/(1+|x|^2)\le |L(\log(1+|x|^2)|+C_1+C_1\log(1+|x|^2).
$$
Hence by Proposition~\ref{pr1}
the functions $\log(1+|x|^2)$ and $|L(\log(1+|x|^2)|$
are integrable on $[0, T]\times\mathbb{R}^d$ with respect to~$\mu_t\,dt$
and condition~(\ref{con-new}) is fulfilled.
\end{example}

\begin{proposition}\label{pr2}
Suppose that $V\in C^2(\mathbb{R}^d)$ and
\mbox{$\lim\limits_{|x| \to+\infty}V(x)=+\infty$}.

\, {\rm (i)} \,  There exists
a function $\theta\in C^2(\mathbb{R})$ such that $\theta(V)\in L^1(\nu)$ and
$$
\theta\ge 0, \quad \theta(0)=0, \quad
0\le\theta'(t)\le 1,
\quad \theta''\le 0,
\quad
\lim\limits_{t\to+\infty}\theta(t)=+\infty.
$$

\, {\rm (ii)} \, Assume that for some $\tau\in(0, T]$ one has
$$
\int_0^{\tau}\int_{\mathbb{R}^d} \Bigl( |\sqrt{A}\nabla V|^2+|LV|\Bigr)\,d\mu_t\,dt<\infty
$$
and that $\theta$ satisfies all assumptions listed in {\rm (i)} and $\theta(V)\in L^1(\nu)$.
Then $\theta(V)$ satisfies the following inequality:
\begin{multline*}
\int_0^{\tau}\int_{\mathbb{R}^d} \Bigl( |\sqrt{A}\nabla\theta(V)|^2+|L\theta(V)|\Bigr)\,d\mu_t\,dt
\\
\le 2e^{C\tau} \biggl(\int\theta(V)\,d\nu+\int_0^{\tau}
\int_{\mathbb{R}^d} \Bigl( |\sqrt{A}\nabla V|^2+|LV|\Bigr)\,d\mu_t\,dt\biggr).
\end{multline*}
\end{proposition}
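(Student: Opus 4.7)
For (i), I will construct $\theta$ adapted to the tail function $F(t):=\nu(V>t)$, which tends to $0$ because $\nu$ is a probability measure and $V\to+\infty$. Inductively choose $t_0=0<t_1<t_2<\cdots$ with $F(t_n)\le 2^{-n}$ and gaps $t_n-t_{n-1}\ge 2^n$ that are nondecreasing in $n$ (possible since $F\to 0$). Let $\theta$ be the continuous piecewise-linear function with $\theta(t_n)=n$, whose slopes $m_n=1/(t_n-t_{n-1})\le 2^{-n}$ are then nonincreasing. This makes $\theta$ concave and nondecreasing on $[0,\infty)$ with $\theta(0)=0$, $\theta'\le 1$, $\theta(t_n)=n\to+\infty$, and by Fubini
\[
\int\theta(V)\,d\nu=\int_0^\infty\theta'(t)F(t)\,dt\le\sum_{n\ge 1}m_n(t_n-t_{n-1})F(t_{n-1})\le\sum_{n\ge 1}2^{-(n-1)}<\infty.
\]
After a shift of $V$ so that $V\ge 0$ and an extension of $\theta$ by $0$ on $(-\infty,0]$, I will round the finitely many slope discontinuities via convolution with a small nonnegative mollifier to obtain $\theta\in C^2(\mathbb{R})$ while preserving monotonicity of $\theta'$ and the sign of $\theta''$.

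For (ii), the strategy is to apply Proposition \ref{pr1} directly to $U:=\theta(V)\in C^2(\mathbb{R}^d)$. This composition is nonnegative (since $\theta\ge 0$), tends to $+\infty$ at infinity (since $\theta\to+\infty$ and $V\to+\infty$), and lies in $L^1(\nu)$ by hypothesis. The chain rule together with $0\le\theta'\le 1$ and $\theta''\le 0$ yields the pointwise inequalities
\[
|\sqrt{A}\nabla U|^2=\theta'(V)^2|\sqrt{A}\nabla V|^2\le|\sqrt{A}\nabla V|^2,\qquad LU=\theta'(V)LV+\theta''(V)|\sqrt{A}\nabla V|^2\le|LV|.
\]
Thus Proposition \ref{pr1} applies to $U$ with $W:=|LV|$ (integrable by the standing assumption of part (ii)) and the constant there set to $0$, producing
\[
\int_0^\tau\!\int_{\mathbb{R}^d}|LU|\,d\mu_t\,dt\le 2\Bigl(\int\theta(V)\,d\nu+\int_0^\tau\!\int_{\mathbb{R}^d}|LV|\,d\mu_s\,ds\Bigr).
\]
Adding the pointwise estimate on $|\sqrt{A}\nabla U|^2$ yields the announced inequality; the factor $e^{C\tau}$ in the statement is automatic once $C=0$ is taken, since $1\le e^{C\tau}$ for any $C\ge 0$.

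The only substantive obstacle lies in (i): simultaneously enforcing concavity, $C^2$-regularity, divergence of $\theta$ at $+\infty$, and integrability of $\theta(V)$ forces a balanced choice of the $t_n$ — if the slopes $m_n$ decay too slowly then $\int\theta(V)\,d\nu$ may diverge for a slowly decaying $F$, while too-fast decay leaves $\theta$ bounded. The twin conditions $F(t_n)\le 2^{-n}$ and $t_n-t_{n-1}\ge 2^n$ hit the sweet spot: the gap condition makes $\theta(t_n)=\sum_{k\le n}m_k(t_k-t_{k-1})=n\to\infty$, while the tail condition forces the estimating series $\sum F(t_{n-1})\le\sum 2^{-(n-1)}$ to converge. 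Part (ii), by contrast, is essentially a one-line invocation of Proposition \ref{pr1} once the two pointwise inequalities above are observed.
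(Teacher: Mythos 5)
Your part (ii) is essentially identical to the paper's proof: the two pointwise estimates $|\sqrt{A}\nabla\theta(V)|^2\le|\sqrt{A}\nabla V|^2$ and $L\theta(V)\le|LV|$ (chain rule, $0\le\theta'\le 1$, $\theta''\le 0$) are exactly what is used there, followed by an application of Proposition \ref{pr1} with $W=|LV|$ and $C=0$; the observation that $2\le 2e^{C\tau}$ for any $C\ge 0$ is also the correct way to reconcile the sharper bound obtained with the stated one.

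For part (i), you take a genuinely different route. The paper simply invokes the construction in the proof of \cite[Proposition~7.1.8]{book}; you instead build $\theta$ from scratch via the tail function $F(t)=\nu(V>t)$ and the layer-cake identity $\int\theta(V)\,d\nu=\int_0^\infty\theta'(t)F(t)\,dt$, picking a piecewise-linear concave $\theta$ with $\theta(t_n)=n$ and nonincreasing slopes $m_n=1/(t_n-t_{n-1})$, with $t_n$ chosen so that $F(t_n)\le 2^{-n}$ and the gaps are at least $2^n$ and nondecreasing. This is a clean, self-contained version of a de la Vall\'ee Poussin/Lyapunov-type truncation argument and buys independence from the external citation. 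Two small points need attention. First, you write that the piecewise-linear $\theta$ has ``finitely many'' slope discontinuities; there are in fact infinitely many (one at each $t_n$), though a single global mollification by a kernel of width less than, say, $(t_1-t_0)/2$ still smooths the function while keeping $\theta'$ nonincreasing and $0\le\theta'\le 1$, so nothing breaks. Second, and more delicate: extending $\theta$ by $0$ on $(-\infty,0]$ produces a jump of $\theta'$ upward at $0$ (from $0$ to $m_1>0$), so $\theta''$ is not $\le 0$ across $0$, and mollifying cannot repair this; one should either restrict the sign condition $\theta''\le 0$ to $[0,+\infty)$ (which is what is actually used, since $\theta$ is composed with $V\ge 0$), or mollify only away from the kink at $0$. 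This is not a defect you introduced: as stated, the conditions $\theta\ge 0$, $\theta(0)=0$, $\theta''\le 0$ on all of $\mathbb{R}$, $\theta\in C^2(\mathbb{R})$ and $\theta\to+\infty$ are mutually incompatible (they force $\theta'(0)=0$ and then, by concavity, $\theta'\equiv 0$ on $[0,\infty)$). The intended reading is that the monotonicity and concavity conditions are required on $[0,+\infty)$, and with that reading your construction goes through.
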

\begin{proof}
In the  proof of Proposition 7.1.8 in \cite{book} it was shown that there is a
 function $\theta\in C^2(\mathbb{R})$ such that $\theta\ge 0$, $\theta(0)=0$,
$0\le\theta'(t)\le 1$, $\theta''\le 0$
and $\theta(V)\in L^1(\nu)$. Then
$$
|\sqrt{A}\nabla \theta(V)|^2=|\theta'|^2|\sqrt{A}\nabla V|^2\le|\sqrt{A}
\nabla V|^2,
$$
$$
L \theta(V)=\theta''(V)|\sqrt{A}\nabla V|^2+\theta'(V)LV\le |LV|.
$$
Applying Proposition \ref{pr1} with $W=|LV|$ and $C=0$ we obtain our claim.
\end{proof}

The next assertion enables us to estimate the measure of a ball in the path space
with the aid of the function~$V$.

\begin{proposition}\label{pt-ball}
Let $\tau\in(0, T]$.
Suppose that $\{\mu_t\}$ is a solution to the  Cauchy problem $\partial_t\mu_t=L^{*}\mu_t$
on $[0,\tau]$ with
$\mu_0=\nu$ and that there exists a Borel probability measure $P_{\nu}$ on
$C([0,\tau],\mathbb{R}^d)$
such that {\rm (i)}, {\rm (ii)} and {\rm (iii)} are fulfilled. Suppose also that there is a
nonnegative function $V\in C^2(\mathbb{R}^d)$ with $\lim\limits_{|x|\to\infty}V(x)=+\infty$
such that $V\in L^1(\nu)$ and
$$
\int_0^\tau \int_{\mathbb{R}^d}
\Bigl(|\sqrt{A}\nabla V|^2+|LV|\Bigr)\,d\mu_t\,dt<\infty.
$$
Then for every $q>0$ one has
$$
P_{\nu}\Bigl(\omega\colon \sup_{t\in[0, T]}V(\omega(t))\ge q\Bigr)
\le
\frac{2}{q}\biggl(\int_{\mathbb{R}^d}V\,d\nu+\int_0^\tau \int_{\mathbb{R}^d}
\Bigl( |\sqrt{A}\nabla V|^2+|LV|\Bigr)\,d\mu_s\,ds\biggr).
$$
\end{proposition}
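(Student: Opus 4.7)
The plan is to apply Doob's weak $L^1$ maximal inequality to a nonnegative submartingale constructed from a bounded $C^2$ truncation of $V$, and then pass to the limit in the truncation parameter to recover the bound for $V$ itself. First, I would pick cutoffs $\zeta_N\in C_b^\infty(\mathbb{R})$ as in the proof of Proposition \ref{pr1} (so $0\le\zeta_N'\le 1$, $\zeta_N''\le 0$, $\zeta_N(t)=t$ for $t\le N-1$, $\zeta_N(t)=N$ for $t\ge N+1$), arranged so that in addition $|\zeta_N''|\le 1$ uniformly in $N$. Setting $V_N:=\zeta_N(V)\in C_b^2(\mathbb{R}^d)$, one has
$$LV_N=\zeta_N'(V)LV+\zeta_N''(V)|\sqrt{A}\nabla V|^2,$$
so $|LV_N|\le |LV|+|\sqrt{A}\nabla V|^2$, which is $\mu_t\,dt$-integrable on $[0,\tau]\times\mathbb{R}^d$ by hypothesis.

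Next I would extend property (ii) from $C_0^\infty$ test functions to $V_N$, i.e.\ show that the process
$$M_t^N:=V_N(\omega(t))-V_N(\omega(0))-\int_0^t LV_N(s,\omega(s))\,ds$$
is a true $P_\nu$-martingale. Since $V_N$ is bounded $C^2$ but not compactly supported, this is the main technical obstacle and requires a standard localization/approximation: cut $V_N$ by $\chi_R\in C_0^\infty$ with $\chi_R\equiv 1$ on $B_R$, apply (ii) to the resulting $C_0^2$ function (itself reachable from $C_0^\infty$ via mollification using local integrability of $a^{ij},b^i$ w.r.t.\ $\mu_t\,dt$), and pass to the limit $R\to\infty$ using boundedness of $V_N$, $\mu_t\,dt$-integrability of $|LV_N|$, and property (iii) with Fubini to transfer integrability from $\mu_t\,dt$ to $P_\nu\otimes dt$.

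Once $M^N$ is known to be a martingale, introduce
$$X_t^N:=V_N(\omega(t))+\int_0^t (LV_N)^-(s,\omega(s))\,ds=V_N(\omega(0))+M_t^N+\int_0^t (LV_N)^+(s,\omega(s))\,ds.$$
The right-hand expression shows $X^N$ is a $P_\nu$-submartingale (martingale plus a nondecreasing nonnegative process), and the left-hand expression shows $X^N\ge 0$. Doob's weak $L^1$ maximal inequality then yields
$$qP_\nu\Bigl(\sup_{t\in[0,\tau]}X_t^N\ge q\Bigr)\le E^{P_\nu}[X_\tau^N]=\int_{\mathbb{R}^d}V_N\,d\mu_\tau+\int_0^\tau\int_{\mathbb{R}^d}(LV_N)^-\,d\mu_s\,ds,$$
the equality coming from (iii) and Fubini. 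Proposition \ref{pr1} with $W=|LV|$ and $C=0$ gives $\int V\,d\mu_\tau\le\int V\,d\nu+\int_0^\tau\int|LV|\,d\mu_s\,ds$, and the sign conditions together with $|\zeta_N''|\le 1$ give $(LV_N)^-\le |LV|+|\sqrt{A}\nabla V|^2$. Combining these,
$$E^{P_\nu}[X_\tau^N]\le \int V\,d\nu+2\int_0^\tau\!\int|LV|\,d\mu_s\,ds+\int_0^\tau\!\int|\sqrt{A}\nabla V|^2\,d\mu_s\,ds\le 2\biggl(\int V\,d\nu+\int_0^\tau\!\int\bigl(|\sqrt{A}\nabla V|^2+|LV|\bigr)\,d\mu_s\,ds\biggr).$$

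To remove the truncation, note that for $N\ge q+1$, monotonicity of $\zeta_N$ and the identity $\zeta_N(s)=s$ for $s\le N-1$ imply $V_N(\omega(t))\ge q$ whenever $V(\omega(t))\ge q$; combined with $X_t^N\ge V_N(\omega(t))$, this gives $\{\sup_{t\in[0,\tau]}V(\omega(t))\ge q\}\subset\{\sup_{t\in[0,\tau]}X_t^N\ge q\}$. Since the bound on $E^{P_\nu}[X_\tau^N]$ is independent of $N$, the announced estimate follows.
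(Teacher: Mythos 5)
Your proof is correct, and it takes a genuinely different (and in fact cleaner) route than the paper's. The paper splits $V(\omega(t))$ into the local martingale part $M_t = V(\omega(t)) - V(\omega(0)) - \int_0^t LV\,ds$ and the remainder $V(\omega(0)) + \int_0^t LV\,ds$, then applies a Doob-type bound to $M_t$ and Markov's inequality to the remainder, combining the two at level $q/2$. As written, the paper's intermediate step $P_\nu(\sup_{t\le\tau}|M_t|\ge q)\le\frac{1}{q}\,\mathbb{E}[\langle M\rangle_\tau]$ does not match the standard form of Doob's $L^2$ maximal inequality (which would produce $1/q^2$), and it is not clear how the stated $2/q$ constant emerges from that decomposition. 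Your approach sidesteps this entirely: by absorbing the nonpositive part of the drift into the process and setting $X_t^N = V_N(\omega(t)) + \int_0^t(LV_N)^-\,ds = V_N(\omega(0)) + M_t^N + \int_0^t(LV_N)^+\,ds$, you obtain a single nonnegative submartingale to which Doob's weak $(1,1)$ maximal inequality applies directly, giving $qP_\nu(\sup_t X_t^N\ge q)\le\mathbb{E}[X_\tau^N]$. The bound $\mathbb{E}[X_\tau^N]\le 2\bigl(\int V\,d\nu + \int_0^\tau\!\int(|\sqrt{A}\nabla V|^2 + |LV|)\,d\mu_s\,ds\bigr)$ then follows from Proposition~\ref{pr1} (with $W=|LV|$, $C=0$) and the pointwise estimate $(LV_N)^-\le|LV|+|\sqrt{A}\nabla V|^2$, and the event inclusion $\{\sup V\ge q\}\subset\{\sup X^N\ge q\}$ for $N\ge q+1$ removes the truncation. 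Both you and the paper treat the extension of the martingale property from $C_0^\infty$ to the bounded $C^2$ function $\zeta_N(V)$ somewhat tersely, but you are more explicit that this localization is the technical crux. Your unified submartingale construction is arguably the more transparent way to get exactly the constant $2/q$ claimed in the statement.
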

\begin{proof}
Using the function $\zeta_N(V)$ in place of $V$ and the assumption about the integrability
of the function $LV$ one can verify that
$$V(\omega(t))-V(\omega(0))
-\int_0^t
LV(s,\omega(s))\,ds
$$
is a martingale with the quadratic variation
$$
\int_0^t |\sqrt{A}\nabla V(s,\omega(s))|^2\,ds.
$$
By Doob's  inequality we have
\begin{multline*}
P_{\nu}\biggl(\omega\colon \sup_{t\in[0, \tau]}\biggl|V(\omega(t))
-V(\omega(0))-\int_0^t LV(s,\omega(s))\,ds\biggr|\ge q\biggr)
\\
\le
\frac{1}{q}
\int_0^\tau\int_{\mathbb{R}^d} |\sqrt{A}\nabla V|^2\,d\mu_s\,ds.
\end{multline*}
Since
\begin{multline*}
P_{\nu}\Bigl(\omega\colon \sup_{t\in[0, \tau]}V(\omega(t))\ge q\Bigr)
\\
\le P_{\nu}\biggl(\omega\colon \sup_{t\in[0,\tau]}
\biggl|V(\omega(t))-V(\omega(0))-\int_0^tLV(s,\omega(s))
\,ds\biggr|\ge q/2\biggr)
\\
+
P_{\nu}\biggl(\omega\colon \sup_{t\in[0,\tau]}
\biggl|V(\omega(0))+\int_0^tLV(s,\omega(s))\,ds\biggr|
\ge q/2\biggr),
\end{multline*}
we obtain
$$
P_{\nu}\Bigl(\omega \colon \sup_{t\in[0,\tau]}V(\omega(t))\ge q\Bigr)\le
\frac{2}{q}\biggl(\int_{\mathbb{R}^d} V\,d\nu
+\int_0^\tau\int_{\mathbb{R}^d} \Bigl(|\sqrt{A}\nabla V|^2+|LV|\Bigr)
\,d\mu_s\,ds\biggr),
$$
which completes the proof.
\end{proof}

\section{Proof of the main result}

The proof of the theorem follows the scheme used by Figalli~\cite{F} and Trevisan~\cite{TREV}.
However, there are some differences:
Trevisan's result is not applicable even for smooth coefficients
without global integrability of the coefficients. Here we substantionally used some recent
results on the uniqueness of probbaility solutions to Fokker--Planck--Kolmogorov equations
from \cite{book} and \cite{ManSh}.
When reducing the general case to that of smooth coefficients (as also Figalli and Trevisan did),
we encounter two problems: 1)~it is necessary to control that the solutions
with smoothed coefficients converge to the considered solution, which is not automatic due
to the lack of uniqueness, 2)~for a~priori estimates it is necessary to keep
condition~(\ref{con-new}) uniformly. The first problem is overcome by using
the smoothing involving not only the space variable, but also the time.
The second problem is solved with the aid of the equation itself,
namely, we estimate the integral of $\langle \beta(t,x), x\rangle$
for the approximating drift $\beta$ by means of some integral of the diffusion matrix.
Note also that before picking a common compact set of measure close to~$1$
for the corresponding measures on $\Omega_d$
we first pick a common ball of measure close to~$1$.
Finally,
we verify  that for passing to the limit our local integrability
conditions on the coefficients are sufficient.

\begin{proof}[Proof of Theorem \ref{tmain}]
First we assume that all our hypotheses hold on some larger interval $[0,T_1]$, $T_1>T$,
and at the last step explain how to obtain a representation on $[0,T]$ without that assumption.
Set
$$
V(x)=\log(1+|x|^2).
$$
 Take a function $\theta$ such that $\theta(\log(3+2|x|^2))\in L^1(\nu)$
and all conditions from (i) in~Proposition~\ref{pr2} are fulfilled.
According to Proposition~\ref{pr1} there is a number $N_\theta$ such that
$$
\sup_{t\in[0, T_1]}\int \theta(\log(3+2|x|^2))\, \mu_t(dx)\le N_\theta.
$$
Note that the coefficients $3$ and $2$ are only technical things.

{\bf I. Justification of the replacement of $\mu_t$ by $\mu_t^{\delta}=\mu_{t+\delta}$.}

Passing from $\mu_t$ to $\mu_{t+\delta}$ enables us to smmoth solutions not only
with respect to~$x$, but also with respect to~$t$.

Suppose that for every $\delta\in(0, T_1-T)$ there exists a measure $P_{\delta}$
on $\Omega_d$ satisfying
(i), (ii) and (iii) with the coefficients
$$
A_{\delta}(t, x)=A(t+\delta, x), \quad b_{\delta}(t, x)=b(t+\delta, x)
$$
and the corresponding operator
$$
L_\delta f=a_{\delta}^{ij}\partial_{x_i}\partial_{x_j}u+b^i_\delta \partial_{x_i} u
$$
such that  $\mu^{\delta}_t=\mu_{t+\delta}$ solves the  Cauchy problem with initial condition
$\mu^{\delta}_0=\mu_{\delta}$.
We show that it is possible to extract from $P_{\delta}$ a weakly convergent sequence
$P_{\delta_n}$ with $\delta_n\to 0$
such that its limit is a solution to the original martingale problem and gives a representation for
the original solution~$\mu_t$.
We observe that $\mu_t^{\delta}$ converges weakly to $\mu_t$ as $\delta\to 0$ for every $t$ by the
continuity of the mapping $t\mapsto\mu_t$. Omitting again indication of $\mathbb{R}^d$
when integrating over the whole space, we have
$$
\int \theta(V)\,d\mu^{\delta}_t\le \sup_{t\in[0, T_1]}\int\theta(V)\,d\mu_t\le N_\theta.
$$
Moreover,
$$
\int_0^T\int \Bigl(|\sqrt{A_{\delta}}\nabla V|^2+|L_{\delta}V|\Bigr)\,d\mu_t^{\delta}\,dt\le
\int_0^{T_1}\int \Bigl(|\sqrt{A}\nabla V|^2+|LV|\Bigr)\,d\mu_t\,dt\le C_1,
$$
where $C_1$ does not depend on $\delta$.
By Proposition \ref{pr2}
$$
\int_0^T\int \Bigl(|\sqrt{A_{\delta}}\nabla\theta(V)|^2+|L_{\delta}\theta(V)|\Bigr)d\mu_t^{\delta}\,dt\le
C(T)(N_\theta+C_1).
$$
By Proposition \ref{pt-ball} applied to the function $\theta(V)$, for every $\varepsilon>0$, there exists $R>0$
such that for all $\delta\in(0, T_1-T)$ we have
$$
P_{\delta}\bigl(\omega\colon \|\omega\|\le R\bigr)\ge 1-\varepsilon.
$$
We now need two results from \cite{TREV}. The first one is Theorem~A2.
Let $\Theta\colon [0,+\infty)\to [0,+\infty)$ be a lower semicontinuous function
and let $\Theta_1, \Theta_2 \colon [0,+\infty)\to [0,+\infty)$ be convex functions
such that $\Theta_2(2x)\le C\Theta_2(x)$, $\Theta_1(0)=\Theta_2(0)=0$ and
$$
\lim\limits_{x\to+\infty}
\Theta(x)=\lim\limits_{x\to+\infty}\frac{\Theta_1(x)}{x}
=\lim\limits_{x\to+\infty}\frac{\Theta_2(x)}{x}=+\infty.
$$
Then there is a compact function $\Psi\colon C[0, T]\to [0,+\infty]$, i.e.,
the sets $\{\Psi\le R\}$ are compact for finite $R$, such that, whenever
$\{\alpha_t\}$, $\{\beta_t\}$, $\varphi=\{\varphi_t\}$ are progressively measurable
processes on a filtered probability space $(\Omega,(\mathcal{F}_t)_{t\in [0, T]},P)$ for which
$$
M_t:=\varphi_t-\int_0^t \beta_s\, ds \quad \hbox{and} \quad
M_t^2-\int_0^t \alpha_s\, ds
$$
are $P$-a.s. continuous local martingales and $\alpha_t\ge 0$ a.s., one has
$$
\mathbb{E}\Psi(\varphi)\le
\mathbb{E}\biggl[ \Theta(\varphi_0)+\int_0^T [\Theta_1(|\beta_t|)+\Theta_2(\alpha_t)]\, dt\biggr].
$$
Next, according to \cite[Corollary~A5]{TREV}, if $\eta\in\mathcal{P}(C[0, T],\mathbb{R}^d)$
is a solution to the martingale problem associated with an elleiptic operator~$L$
(not necessarily our operator),
then, for every function $f\in  C_0^\infty(\mathbb{R}^d)$ and the marginal distributions
$\eta_t$ for $\eta$ it holds
$$
\int \Psi(f)\, d\eta
\le \int \Theta(|f(0,x)|\, \eta_0(dx)+
\int_0^T \int  \Bigl[\Theta_1(|Lf|) + \Theta_2(|\sqrt{A}\nabla f|^2|)\Bigr] \, d\eta_t\, dt,
$$
where $\Psi$, $\Theta$, $\Theta_1$ and $\Theta_2$ are the same as above
and $\Psi(f)(\omega)=\Psi(f(\omega(\cdot)))$. Note that the right-hand side is finite
due to our hypotheses on $A$ and $b$ and the compactness of support of~$f$ in~$x$.

Now take $\psi_R\in C^{\infty}_0(\mathbb{R}^d)$ with $\psi_R(x)=1$ if $|x|\le R$,
$\psi_R(x)=0$ if $|x|>2R$. Let us apply the cited corollary to the function
$f_i(x)=x_i\psi_R(x)$ independent of~$t$. Denoting by $\mathbb{E}_{P_{\delta}}$
the integral with respect to~$P_\delta$, we obtain the estimate
\begin{multline*}
\mathbb{E}_{P_{\delta}}\Psi(f_i)\le \int\Theta(x_i\psi_R(x))
\,\mu_{\delta}(dx)
\\
+
\int_0^T\int \Bigl(\Theta_1(|L_{\delta}(x_i\psi_R(x))|)+\Theta_2
(|\sqrt{A_{\delta}}\nabla(x_i\psi_R)|^2)\Bigr)\, \mu_t^{\delta}(dx)\,dt,
\end{multline*}
where for some number $C_2$ independent of $\delta$
the right-hand side is estimated by
$$
\sup_x|\Theta(x_i\psi_R(x))|+C_2\int_0^{T_1}\int_{|x|\le 2R}\Bigl(\Theta_1
(\|A(t, x)\|+|b(t, x)|)+\Theta_2(\|A(t, x)\|)\Bigr)\, \mu_t(dx)\,dt,
$$
which does not depend on $\delta$.
As in \cite{TREV}, we consider the compact function
$$
\Psi_d(\omega)=\sum_{i=1}^d\Psi(\omega_i)
$$
on $\Omega_d$. We have
$$
\mathbb{E}_{P_{\delta}} (I_{\|\omega\|\le R}\Psi_d) =\mathbb{E}_{P_{\delta}}
[I_{\|\omega\|\le R}\Psi(f_i)]\le
\sum_{i=1}^d\mathbb{E}_{P_{\delta}}\Psi(f_i)\le C_3(R),
$$
where $C_3(R)$ depends on $R$, but does not depend on $\delta$.
Taking a sufficiently large number $M$ we conclude that for the compact set
$$
K=\{\omega\colon \Psi_d(\omega) \le M, \ \|\omega\|\le R\}
$$
in $\Omega_d$
there holds the estimate
$$
P_{\delta}(K)\ge 1-2\varepsilon.
$$
Therefore, the family of measures $P_{\delta}$ contains a sequence $P_{\delta_k}$ with
$\delta_{k}\to 0$ weakly converging to some probability measure $P$.
Let us verify that the measure $P$ satisfies (i), (ii) and (iii).
The first and last properties are obtained in the limit as $\delta_k\to 0$
in the equality
$$
\int_{\Omega_d} f(\omega(t))\, P_{\delta_k}(d\omega)=\int f(x)\,
\mu_{t+\delta_k}(dx) \quad \forall f\in C_0^{\infty}(\mathbb{R}^d).
$$
For the proof of the second (martingale) property we have to show that for every bounded continuous
function
$$g\colon \Omega_d\to\mathbb{R}$$
that is measurable with respect to the $\sigma$-algebra $\mathcal{F}_s$ there holds the equality
$$
\int_{\Omega_d}\Bigl[f(\omega(t))-f(\omega(s))-\int_s^t
Lf(\tau,\omega(\tau))\, d\tau\Bigr]g(\omega)\, P(d\omega)=0, \quad t\ge s,
$$
where $f\in C_0^{\infty}(\mathbb{R}^d)$.
To  this end, it suffices to show that
$$
\lim\limits_{\delta_k\to 0}\int_{\Omega_d}\biggl[\int_s^t
L_{\delta}f(\tau, \omega(\tau))\, d\tau\biggr]g(\omega)\,P_{\delta}(d\omega)
=
\int_{\Omega_d}\biggl[\int_s^t Lf(\tau,\omega(\tau))\, d\tau\biggr]g(\omega)\,P(d\omega),
$$
because convergence of the integrals of $[f(\omega(t))-f(\omega(s))]g(\omega)$ is obvious by the
continuity of this function on~$\Omega_d$.
Let $q^{ij}, z^i\in C^{\infty}([-1, T_1]\times\mathbb{R}^d)$ and
$$
\widetilde{L}=q^{ij}\partial_{x_i}\partial_{x_j}+z^i\partial_{x_i}
$$
and let $\widetilde{L}_\delta$ be the corresponding operator with the time-shifted coefficients
$q^{ij}(t+\delta,x)$ and  $z^i(t+\delta,x)$.

It is clear from (\ref{ek1}) that the difference
$$
\int_{\Omega_d}\biggl[\int_s^t \widetilde{L}f(\tau,\omega(\tau))\, d\tau
\biggr]g(\omega)\,P_{\delta}(d\omega)
-
\int_{\Omega_d}\biggl[\int_s^t \widetilde{L}f(\tau,\omega(\tau))\, d\tau
\biggr]g(\omega)\,P(d\omega)
$$
tends to zero. Moreover, by Lemma~\ref{lem1} the expression
$$
\biggl|\int_{\Omega_d}\biggl[\int_s^t (\widetilde{L}-L)
f(\tau,\omega(\tau))\biggr]g(\omega)\,P(d\omega)\biggr|
$$
is estimated by
$$
\int_0^T\int \Bigl(|a^{ij}-q^{ij}||\partial_{x_i}\partial_{x_j}f|
+|b^i-z^i||\partial_{x_i}f|\Bigr)\,d\mu_t\,dt,
$$
which can be made arbitrarily small by a suitable choice of $q^{ij}$ and $z^i$
approximating $a^{ij}$ and $b^i$ in $L^1$ with respect to the measure
$\mu_t\, dt$ on $[0, T_1]\times U$, where $U$ is a ball containing the support of~$f$.
Since the functions $(\widetilde{L}-\widetilde{L}_{\delta})f(t,x)$ converge to zero uniformly on
$[0, T]\times\mathbb{R}^d$ as $\delta\to 0$, the expression
$$
\int_{\Omega_d}\biggl[\int_s^t (\widetilde{L}
-\widetilde{L}_{\delta})f(\tau,\omega(\tau))\, d\tau\biggr]g(\omega)\,P_{\delta}(d\omega)
$$
tends to zero as $\delta\to 0$. Finally, we observe that the expression
$$
\int_{\Omega_d}\biggl[\int_s^t (\widetilde{L}_{\delta}
-L_{\delta})f(\tau,\omega(\tau))\, d\tau\biggr]g(\omega)\,P_{\delta}(d\omega)
$$
is estimated by
$$
\int_0^{T_1}\int \Bigl(|a^{ij}-q^{ij}||\partial_{x_i}
\partial_{x_j}f|+|b^i-z^i||\partial_{x_i}f|\Bigr)\,d\mu_t\,dt,
$$
which can be made arbitrarily small by a suitable choice of $q^{ij}$ and $z^i$ as above.
Thus, we have verified (i), (ii), (iii) for $P$.
Therefore, for completing the proof of the theorem it suffices to show
that for each fixed $\delta>0$ there exists a suitable  measure $P_\delta$
for the solution $\mu_t^{\delta}$ to the  Cauchy problem $\partial_t\mu_t^{\delta}
=L_{\delta}^{*}\mu_t^{\delta}$ with $\mu_0^{\delta}=\mu_{\delta}$.

\vskip .1in
{\bf II. Smoothing of the coefficients and verification of the conditions with the Lyapunov function}.

Let us fix $\delta \in (0, T_1-T)$.
Let $\zeta\in C^{\infty}([0, +\infty))$, $0\le\zeta\le 1$,
$\zeta'\le 0$, $\zeta(t)=1$ if $t<1$ and $\zeta(t)=0$ if $t>2$.
Set
$$
\eta(t)=\int_t^{+\infty}\zeta(s)\,ds.
$$
It is clear that $\eta\ge 0$, $\eta(t)=0$ if $t>2$ and $\eta'(t)=-\zeta(t)$.
Let $c_1$ and $c_2$ be numbers such that
$$
c_1\int_{\mathbb{R}^d} \zeta(|x|^2)\,dx=1, \quad c_2\int_{\mathbb{R}} \zeta(|t|^2)\,dt=1.
$$
For every $\varepsilon$ with  $0<\varepsilon<\min\{\delta/16, 1/2\}$ set
$$
h_{\varepsilon}(t, x)=c_1c_2\varepsilon^{-d-1}\zeta(|t|^2/
\varepsilon^2)\zeta(|x|^2/\varepsilon^2).
$$
Let $\gamma$ be the  standard Gaussian density on $\mathbb{R}^d$. Set
$$
\sigma^{\varepsilon}(t, x)=\varepsilon\gamma(x)+(1-\varepsilon)
\int\int h_{\varepsilon}(t-s, x-y)\mu_s^{\delta}(dy)\,ds,
$$
where the integration is formally taken over all of $\mathbb{R}^{d+1}$.
However, we take into account that the
function $h_{\varepsilon}(t-s, x-y)$ vanishes if $s\le -\delta/2$
or $s\ge T+\delta/2$, so that actually the integration in $s$ is taken within the limits
 $-\delta/2$ and $T+\delta/2$ and for such $s$ the measures $\mu_s^{\delta}$
are defined. It is clear that $\sigma^{\varepsilon}>0$ and
$$
\int\sigma^{\varepsilon}(t, x)\,dx=1.
$$
In addition, for every function $f\in C_0^{\infty}(\mathbb{R}^d)$ and
every $t\in[0, T]$ we have
$$
\lim_{\varepsilon\to 0}\int f(x)\sigma^{\varepsilon}(t, x)\,dx
=\int f(x)\,\mu_t^{\delta}(dx).
$$
Indeed,
\begin{multline*}
\int f(x)\sigma^{\varepsilon}(t, x)\,dx=
\varepsilon\int f\gamma\, dx+
\\
+(1-\varepsilon)\int c_2\varepsilon^{-1}\zeta(|t-s|^2/\varepsilon^2)
\int \biggl(\int f(x)c_1\varepsilon^{-d}\zeta(|x-y|^2/\varepsilon^2)
\,dx\biggr)\,\mu_s^{\delta}(dy)\,ds.
\end{multline*}
Since
$$
\sup_y\biggl|\int f(x)c_1\varepsilon^{-d}\zeta(|x-y|^2/\varepsilon^2)
\,dx-f(y)\biggr|\le
\varepsilon \sup|\nabla f|c_1\int|x|\zeta(|x|^2)\,dx ,
$$
 it suffices to show that the limit of the expression
$$
\int c_2\varepsilon^{-1}\zeta(|t-s|^2/\varepsilon^2)
\int f(y)\, \mu_s^{\delta}(dy)\,ds
$$
is equal to the  integral of $f$ against $\mu_t^{\delta}$.
This follows immediately by the continuity of the  function
$$s\mapsto \int f(y)\,\mu_s^{\delta}(dy).$$
Thus, for every $t\in[0, T]$ the measures
 $\sigma^{\varepsilon}(t, x)\,dx$  converge weakly to $\mu_t^{\delta}$.

We recall  that $b_{\delta}(t, x)=b(t+\delta, x)$
and $A_{\delta}(t, x)=A(t+\delta, x)$.
Set
$$
\beta_{\varepsilon}^i(t, x)
=\frac{1-\varepsilon}{\sigma^{\varepsilon}(t, x)}
\int\int b_{\delta}^i(s, y)h_{\varepsilon}(t-s, x-y)\, \mu_s^{\delta}(dy)\,ds,
$$
$$
\alpha^{ij}_{\varepsilon}(t, x)
=\frac{1-\varepsilon}{\sigma^{\varepsilon}(t, x)}
\int\int a_{\delta}^{ij}(s, y)h_{\varepsilon}(t-s, x-y)
\, \mu_s^{\delta}(dy)\,ds.
$$
Recall that $\gamma$  is the standard Gaussian density on $\mathbb{R}^d$.
We shall deal with the  operator
$$
\mathcal{L}_{\varepsilon}u(t, x)={\rm trace}(\alpha_{\varepsilon}(t, x)D^2u(x))
+\langle\beta_{\varepsilon}(t, x), \nabla u(x)\rangle
+\frac{\varepsilon\gamma(x)}{\sigma^{\varepsilon}(t, x)}\bigl(\Delta u(x)-\langle x, \nabla u(x)\rangle\bigr),
$$
which should not be confused with the previously defined $L_\varepsilon$;
moreover, $\mathcal{L}_{\varepsilon}$  depends also on~$\delta$, which is now fixed
and is not shown in this notation. Set also
$$
\mathcal{A}_{\varepsilon}=\alpha_{\varepsilon}+\frac{\varepsilon\gamma(x)}{\sigma^{\varepsilon}(t, x)}I.
$$
It is readily seen  that $\sigma^{\varepsilon}$ solves on
$[0, T]\times\mathbb{R}^d$ the  Cauchy problem
$$
\partial_t\sigma^{\varepsilon}=\mathcal{L}_{\varepsilon}^{*}
\sigma^{\varepsilon}, \quad
\sigma^{\varepsilon}(0, x)=\varepsilon\gamma(x)
+(1-\varepsilon)\int\int h_{\varepsilon}(s, x-y)\, \mu_s^{\delta}(dy)\,ds.
$$
We now investigate the expression $\langle \beta_{\varepsilon}(t, x), x\rangle$.
We have
\begin{multline*}
\langle \beta_{\varepsilon}(t, x), x\rangle=
\frac{1-\varepsilon}{\sigma^{\varepsilon}(t, x)}
\int\int\langle b_{\delta}(s, y), y\rangle h_{\varepsilon}
(t-s, x-y)\mu_s^{\delta}(dy)\,ds
\\
+\frac{1-\varepsilon}{\sigma^{\varepsilon}(t, x)}
\int\int\langle b_{\delta}(s, y), x-y\rangle
h_{\varepsilon}(t-s, x-y)\, \mu_s^{\delta}(dy)\,ds.
\end{multline*}
Let us consider the expression
\begin{multline*}
\int\int\langle b_{\delta}(s, y), x-y\rangle
h_{\varepsilon}(t-s, x-y)\, \mu_s^{\delta}(dy)\,ds
\\
=\int c_2\varepsilon^{-1}\zeta(|t-s|^2/\varepsilon)\biggl(
\int\langle b_{\delta}(s, y), x-y\rangle c_1\varepsilon^{-d}
\zeta(|x-y|^2/\varepsilon^2)\, \mu_s^{\delta}(dy)\biggr)\,ds.
\end{multline*}
We observe that $\zeta=-\eta'$ and
$$
\langle b_{\delta}(s, y), x-y\rangle c_1\varepsilon^{-d}
\zeta(|x-y|^2/\varepsilon^2)=
-2^{-1}c_1\varepsilon^2\Bigl\langle b_{\delta}(s, y),
\nabla_x\bigl(\varepsilon^{-d}\eta(|x-y|^2/\varepsilon^2)\bigr)\Bigr\rangle.
$$
Therefore,
\begin{multline*}
\int\int\langle b_{\delta}(s, y), x-y\rangle
h_{\varepsilon}(t-s, x-y)\, \mu_s^{\delta}(dy)\,ds
\\
=-2^{-1}c_1c_2\varepsilon^2\partial_{x_i}
\biggl(\int\int b_{\delta}^i(s, y)
\varepsilon^{-d-1}\zeta(|t-s|^2/\varepsilon^2)\eta(|x-y|^2/
\varepsilon^2)\mu_s^{\delta}(dy)\,ds\biggr).
\end{multline*}
Recall that
$$
\partial_t\mu_t^{\delta}=\partial_{x_i}\partial_{x_j}(a_{\delta}^{ij}\mu_t^\delta)-\partial_{x_i}(b^i_{\delta}\mu_t^\delta)
$$
on $(-\delta, T+\delta)\times\mathbb{R}^d$ and for every fix $(t, x)\in[0, T]\times\mathbb{R}^d$
the function
$$
\zeta(|t-s|^2/\varepsilon^2)\eta(|x-y|^2/\varepsilon^2)
$$
has  compact support in $(-\delta, T+\delta)\times\mathbb{R}^d$.
Thus, there holds the equality
\begin{multline*}
-\partial_{x_i}
\biggl(\int\int b_{\delta}^i(s, y)\varepsilon^{-d-1}
\zeta(|t-s|^2/\varepsilon^2)\eta(|x-y|^2/\varepsilon^2)
\, \mu_s^{\delta}(dy)\,ds\biggr)
\\
=
\partial_t\biggl(\int\int\varepsilon^{-d-1}\zeta(|t-s|^2/
\varepsilon^2)\eta(|x-y|^2/\varepsilon^2)\, \mu_s^{\delta}(dy)\,ds\biggr)
\\
-\partial_{x_i}\partial_{x_j}\biggl(\int\int a_{\delta}^{ij}(s, y)
\varepsilon^{-d-1}\zeta(|t-s|^2/\varepsilon^2)\eta(|x-y|^2/\varepsilon^2)
\, \mu_s^{\delta}(dy)\,ds\biggr).
\end{multline*}
We can estimate the terms in the right-hand as follows:
\begin{multline*}
\partial_t\biggl(\int\int\varepsilon^{-d-1}\zeta(|t-s|^2/
\varepsilon^2)\eta(|x-y|^2/\varepsilon^2)\, \mu_s^{\delta}(dy)\,ds\biggr)
\\
\le 2\varepsilon^{-d-3}\int\int|t-s||\zeta'(|t-s|^2/\varepsilon^2)|
\eta(|x-y|^2/\varepsilon^2)\mu_s^{\delta}(dy)\,ds,
\end{multline*}
\begin{multline*}
-\partial_{x_i}\partial_{x_j}\biggl(\int\int a_{\delta}^{ij}(s, y)
\varepsilon^{-d-1}\zeta(|t-s|^2/\varepsilon^2)\eta(|x-y|^2/
\varepsilon^2)\, \mu_s^{\delta}(dy)\,ds\biggr)
\\
=4\varepsilon^{-d-5}\int\int\langle A_{\delta}(x-y), (x-y)\rangle
\zeta(|t-s|^2/\varepsilon^2)\zeta'(|x-y|^2/\varepsilon^2)\,
\mu_s^{\delta}(dy)\,ds
\\
+2\varepsilon^{-d-3}\int\int ({\rm trace}\, A_{\delta}) \zeta(|t-s|^2/
\varepsilon^2)\zeta(|x-y|^2/\varepsilon^2)\, \mu_s^{\delta}(dy)\,ds
\\
\le 2\varepsilon^{-d-3}\int\int ({\rm trace}\, A_{\delta}) \zeta(|t-s|^2/
\varepsilon^2)\zeta(|x-y|^2/\varepsilon^2)\, \mu_s^{\delta}(dy)\,ds.
\end{multline*}
For obtaining the last  inequality we have used that $\zeta'\le 0$ and
$$
\langle A_{\delta}(x-y), (x-y)\rangle\ge 0.
$$
We observe that whenever $|x-y|\le 2\varepsilon\le 1$ one has
$$
\frac{1}{1+|x|^2}\le \frac{3}{1+|y|^2}.
$$
Thus, we have obtained the estimate
\begin{multline*}
\frac{\langle \beta_{\varepsilon}(t, x), x\rangle}{1+|x|^2}\le
\frac{3}{\sigma^{\varepsilon}(t, x)}\int\int\frac{|\langle b_{\delta},
y\rangle|}{1+|y|^2} h_{\varepsilon}(t-s, x-y)\, \mu_s^{\delta}(dy)\,ds
\\
+\frac{c_1c_2\varepsilon^{-d-1}}{\sigma^{\varepsilon}(t, x)}\int\int|t-s|
|\zeta'(|t-s|^2/\varepsilon^2)|\eta(|x-y|^2/\varepsilon^2)
\, \mu_s^{\delta}(dy)\,ds
\\
+\frac{3c_1c_2\varepsilon^{-d-1}}{\sigma^{\varepsilon}(t, x)}
\int\int\frac{{\rm trace}\, A_{\delta}}{1+|y|^2}\zeta(|t-s|^2/
\varepsilon^2)\zeta(|x-y|^2/\varepsilon^2))\, \mu_s^{\delta}(dy)\,ds.
\end{multline*}
Let us denote the right-hand side of this inequality by $W_1(t, x)$ and observe
that $W_1\ge 0$ and
\begin{multline*}
\int_0^{T}\int W_1(t, x)\sigma^{\varepsilon}(t, x)\,dx\,dt\le
C_4\int_{-\delta}^{T+\delta}\int \frac{|\langle b_{\delta}, y\rangle|+
|{\rm trace}\, A_{\delta}|}{1+|y|^2}\,\mu_s^{\delta}(dy)\,ds
\\
+
c_1c_2\int\int |t|\, |\zeta'(t^2)|\eta(|x|^2)\,dx\,dt,
\end{multline*}
where $C_4$ does not depend on $\varepsilon$.
The function
$$
\frac{|\alpha^{ij}_{\varepsilon}(t, x)|}{1+|x|^2}
$$
is estimated by
$$
W_2(t, x):=\frac{3}{\sigma^{\varepsilon}(t, x)}\int\int
\frac{|a^{ij}_{\delta}(s,y)|}{1+|y|^2}h_{\varepsilon}(t-s, x-y)\, \mu_s^{\delta}(dy)\,ds.
$$
We observe that $W_2\ge 0$ and
$$
\int_0^T\int W_2(t, x)\sigma^{\varepsilon}(t, x)\,dx\,dt\le
3\int_{-\delta}^{T+\delta}\int \frac{|a^{ij}_{\delta}(s,y)|}{1+|y|^2}
\,\mu_s^{\delta}(dy)\,ds.
$$
Set
$$
W_3(t, x)=\frac{\varepsilon\gamma(x)}{(1+|x|^2)\sigma^{\varepsilon}(t, x)}.
$$
Note that
$$
\int_0^T\int W_3(t, x)\sigma^{\varepsilon}(t, x)\,d\,dt\le T\int(1+|x|^2)^{-1}\gamma(x)\,dx.
$$
Thus, we arrive at the estimates
$$
\mathcal{L}_{\varepsilon}\log(1+|x|^2)\le C_5(W_1+W_2+W_3), \quad \Bigl|\sqrt{\mathcal{A}_{\varepsilon}}
\nabla\log(1+|x|^2)\Bigr|^2\le C_5(W_2+W_3),
$$
where $C_5$ does not depend on $\varepsilon$.
Note that for our function $V(x)=\log(1+|x|^2)$ we have
$$
\mathcal{L}_{\varepsilon}\theta(V)=\theta''(V)|\sqrt{\mathcal{A}_{\varepsilon}}\nabla V|^2
+\theta'(V)\mathcal{L}_{\varepsilon}V
\le C_5(W_1+W_2+W_3),
$$
and
$$
|\sqrt{\mathcal{A}_{\varepsilon}}\nabla \theta(V)|^2\le |\sqrt{\mathcal{A}_{\varepsilon}}\nabla V|^2\le C_5(W_2+W_3).
$$
Moreover,
\begin{multline*}
\int \theta(V(x))\sigma^{\varepsilon}(0, x)\,dx\le \varepsilon\int V(x)\gamma(x)\,dx
\\
+
(1-\varepsilon)\int \int\int h_{\varepsilon}(s, x-y)V(x)\,\mu_s^{\delta}(dy)\,dx\,ds.
\end{multline*}
Note that $\log(1+|x|^2)\le \log(1+2|x-y|^2+2|y|^2)\le \log(3+2|y|^2)$ if $|x-y|\le 1$.
Recall also that
$$
\sup_{t\in[0, T]}\int\theta(\log(3+2|y|^2))\, \mu_t^{\delta}(dy)\le N_\theta.
$$
Hence
$$
\int \theta(V(x))\sigma^{\varepsilon}(0, x)\,dx\le \int V(x)\gamma(x)\,dx+N_\theta.
$$
Applying Proposition \ref{pr1} we obtain
\begin{equation}\label{integr}
\int_0^T\int\Bigl(|\mathcal{L}_{\varepsilon}\theta(V)|
+|\sqrt{\mathcal{A}_{\varepsilon}}\nabla \theta(V)|^2\Bigr)\sigma^{\varepsilon}
(t, x)\,dx\,dt\le C_6,
\end{equation}
where $C_6$ does not depend on $\varepsilon$ (recall that $V(x)=\log(1+|x|^2)$).

\vskip .1in
{\bf III. The representation of $\sigma^{\varepsilon}$ by a solution to the
martingale problem with $\mathcal{L}_{\varepsilon}$.}

According to \cite[Theorem 3.5]{ManSh} (see also \cite[Theorem 9.4.3]{book}),
the function $\sigma^{\varepsilon}$ is a unique subprobability solution to the Cauchy problem.
It is shown in \cite{ManSh} and it is very important in our situation that
the integrability condition (\ref{integr}) just for one probability solution $\sigma^{\varepsilon}$
implies the uniqueness in the class of all sub-probability solutions.

We show that there exists a solution to the martingale problem with the operator
$\mathcal{L}_{\varepsilon}$ and initial condition $\sigma^{\varepsilon}(0, x)\,dx$.
Let $\varphi_N(x)=\varphi(x/N)$, $0\le\varphi\le 1$, $\varphi\in C^{\infty}_0(\mathbb{R}^d)$
and $\varphi(x)=1$ if $|x|<1$. Set
$$\mathcal{L}_{\varepsilon}^N=\varphi_N\mathcal{L}_{\varepsilon}.
$$
According to the Trevisan result (for the case of Dirac's initial condition,
see also \cite[Theorem~3.2.6]{StrV}),
there  exists a solution $Q_{\varepsilon}^N$ to the martingale problem
associated with~$\mathcal{L}_\varepsilon^N$ and the initial condition $\sigma^{\varepsilon}(0, x)\,dx$.
Let $\varrho^N_t(dx)dt$ be the corresponding probability
solution to the Cauchy problem with $\mathcal{L}_\varepsilon^N$.

As in the proof of  \cite[Theorem 2.5]{ManSh} and \cite[Theorem 6.7.3]{book},
 one can choose a subsequence
$\{N_k\}$ such that $\varrho^{N_k}_t(dx)\,dt$ converges weakly to $\varrho_t(dx)\,dt$
on every compact set in $[0, T]\times\mathbb{R}^d$
and $\varrho^{N_k}_t$ converges weakly to $\varrho_t$ on every compact set in $\mathbb{R}^d$
for each $t\in[0, T]$. Moreover, $\varrho_t\,dt$ is a sub-probability solution to the
Cauchy problem with $\mathcal{L}_{\varepsilon}$ and initial condition $\sigma^{\varepsilon}(0, x)\,dx$.
By the cited uniqueness result for the Cauchy problem
we have $\varrho_t(dx)=\sigma^{\varepsilon}(t, x)\,dx$.

Let us prove that the family of measures $Q_{\varepsilon}^{N_k}$ is compact in the weak topology.
Let $q>1$ and $\zeta_{q}(t)=t$ if $t<q-1$
and $\zeta_{q}(t)=q$ if $t>q+1$, $0\le \zeta_q'(t)\le 1$, $-c\ge\zeta_q''\le 0$,
where $c$ does not depend on $q$.
Set $V_1=\theta(V)$. Note that
$$
Q_{\varepsilon}^{N_k} \Bigl(\omega\colon \sup_{t\in[0, T]}V_1(\omega(t))\ge q-1\Bigr)=
Q_{\varepsilon}^{N_k} \Bigl(\omega\colon \sup_{t\in[0, T]}\zeta_q(V_1(\omega(t)))\ge q-1\Bigr)
$$
Repeating the arguments from Proposition \ref{pt-ball}
with $\zeta_q(V_1)$ in place of $V$ and taking into account
that $\zeta_q'(V_1)=\zeta_q''(V_1)=0$ if $V_1>q+1$
we obtain the estimate
\begin{multline*}
Q_{\varepsilon}^{N_k} \Bigl(\omega\colon \sup_{t\in[0, T]}V_1(\omega(t))\ge q-1\Bigr)
\\
\le \frac{2C_7}{(q-1)}\Bigl(\int V_1(x)\sigma^{\varepsilon}(0, x)\,dx+
\int_0^T\int_{V_1\le q+1}\Bigl(|\mathcal{L}_{\varepsilon}V_1|
+|\sqrt{\mathcal{A}_{\varepsilon}}\nabla V_1|^2\Bigr)\varrho^{N_k}_t\,dx\,dt\Bigr).
\end{multline*}
Since $\{x\colon V_1(x)\le q+1\}$ is a compact set, for sufficiently large $N_k$
the last integral is close to
$$
\int_0^T\int_{V_1\le q+1}\Bigl(|\mathcal{L}_{\varepsilon}V_1|
+|\sqrt{\mathcal{A}_{\varepsilon}}\nabla V_1|^2\Bigr)\sigma^{\varepsilon}(t, x)\,dx\,dt.
$$
Thus, for every $\lambda\in(0, 1)$ one can take $q$ so large  that there exists a number $k_0$ such
that for every $k>k_0$ we have
$$
Q_{\varepsilon}^{N_k} \Bigl(\omega\colon \sup_{t\in[0, T]}V_1(\omega(t))\ge q-1\Bigr)\le \lambda.
$$
It follows that for every $\lambda\in(0, 1)$ there exists $R>0$ such that
$$
Q_{\varepsilon}^{N_k}\Bigl(\omega\colon \|\omega(t)\|\le R\Bigr)\ge 1-\lambda \quad \forall N_k.
$$
Repeating the arguments from the first part of the proof and
using the functions $\Theta_1$ and $\Theta_2$ appearing from Trevisan's result
and de la Vall\'ee Poussin's theorem for $\|A(t,x)\|I_{|x|\le R}$ and $|b(t,x)|I_{|x|\le R}$,
we obtain the estimate
\begin{multline*}
\mathbb{E}_{Q_{\varepsilon}^{N_k}}\Psi_d(f_i)\le
C_8\sup_x|\Theta(x_i\psi_R(x))|
\\
+C_8T\sup_{x, t}\Bigl(\Theta_1(|\mathcal{L}_{\varepsilon}(x_i\psi_R(x))|)
+\Theta_2(\|\mathcal{A}_{\varepsilon}(t, x)\|)\Bigr)
\end{multline*}
with the functions
 $\Theta(t)=\Theta_1(t)=\Theta_2(t)=t^2$. Here the right-hand side does not depend on $N_k$.
We have
$$
\mathbb{E}_{Q_{\varepsilon}} (I_{\|\omega\|\le R}\Psi_d )\le C_9(R),
$$
where $C_9(R)$ depends on $R$, but does not depend on $\varepsilon$.
For any number $\lambda\in(0, 1)$ one can take a sufficiently
large number $M$ such that for the compact set $K=\{\omega\colon \Psi_d(\omega)\le M, \ \|\omega\|\le R\}$
there holds the estimate
$$
Q_{\varepsilon}^{N_k}(K)\ge 1-2\lambda \quad \forall N_k.
$$
Therefore, the family of measures $Q_{\varepsilon}^{N_k}$ contains a sequence
$\{Q_{\varepsilon}^{N_{k_j}}\}$
weakly converging to some probability measure $Q_{\varepsilon}$. Let us verify that $Q_{\varepsilon}$ corresponds to the
solution $\sigma^{\varepsilon}$. Clearly, conditions (i) and (iii) are fulfilled.
Note that if $f\in C_0^{\infty}(\mathbb{R}^d)$,
 then $\varphi_{N}\mathcal{L}_{\varepsilon}f=\mathcal{L}_{\varepsilon}f$
for all sufficiently large $N$. Hence (ii) follows
from weak convergence of $Q_{\varepsilon}^{N_{k_j}}$.

Thus, for every $\varepsilon$
there  exists a probability measure $Q_{\varepsilon}$
on $\Omega_d$  for which (i), (ii),
(iii) are fulfilled with the operator $\mathcal{L}_{\varepsilon}$
and  $\sigma^{\varepsilon}$ solves the Cauchy problem. Moreover, by Proposition \ref{pt-ball}
for every $\lambda\in(0, 1)$ there exists $R>0$ such that
$$
Q_{\varepsilon}\Bigl(\omega\colon \|\omega\|\le R\Bigr)\ge 1-\lambda \quad
\forall \varepsilon>0.
$$

\vskip .1in
{\bf IV. Verification of the compactness of the family of measures $Q_{\varepsilon}$ and
the proof of the fact  that the limit is the  required measure.}

We need the following version of Jensen's inequality.
Let $\Phi$ be a convex and increasing function on $[0, +\infty)$ with $\Phi(0)=0$,
$\nu$ a subprobability measure on some space $X$, and $f\ge 0$ a measurable function on~$X$.
Then
$$
\Phi\biggl(\int_X f\,d\nu\biggr)\le \int_X \Phi(f)\,d\nu,
$$
which follows by Jensen's  inequality  and the
inequality $\Phi(\alpha t)\le\alpha\Phi(t)$ for $\alpha\in [0,1]$
that  follows from the convexity of $\Phi$.
Let
$\chi_R\ge 0$ be a smooth function such that
$\chi_R(x)=1$ if $|x|\le 2R$ and $\chi_R(x)=0$ if $|x|\ge 3R$.
Note that
$$
\frac{\varepsilon\gamma(x)}{\sigma^{\varepsilon}(t, x)}\le 1.
$$
Hence
$$
\Phi\Bigl(|\beta_{\varepsilon}(x, t)|\chi_R(x)+\frac{\varepsilon\gamma(x)|x|}{\sigma^{\varepsilon}(t, x)}\chi_R(x)\Bigr)\le
\frac{1}{2}\Phi(2|\beta_{\varepsilon}(x, t)|\chi_R(x))+\frac{1}{2}\Phi(6R).
$$
Since
$$
\frac{1-\varepsilon}{\sigma^{\varepsilon}(t, x)}\int
\int h_{\varepsilon}(t-s, x-y)\, \mu_s^{\delta}(dy)\,ds\le 1,
$$
one has
\begin{multline*}
\Phi(2|\beta_{\varepsilon}(x, t)|\chi_R(x))=
\Phi\biggl(\frac{1-\varepsilon}{\sigma^{\varepsilon}(t, x)}
\int \int 2|b_{\delta}(s, y)|\chi_R(x)h_{\varepsilon}(t-s, x-y)\, \mu_s^{\delta}(dy)\,ds
\biggr)
\\
\le \frac{1-\varepsilon}{\sigma^{\varepsilon}(t, x)}
\int \int\Phi(2|b_{\delta}(s, y)|\chi_R(x))h_{\varepsilon}(t-s, x-y)\, \mu_s^{\delta}(dy)\,ds.
\end{multline*}
Therefore, we obtain
$$
2\int_0^T \int\Phi\Bigl(|\beta_{\varepsilon}|\chi_R+\frac{\varepsilon\gamma(x)|x|}{\sigma^{\varepsilon}(t, x)}\chi_R\Bigr)
\, \sigma^{\varepsilon}(x, t)\,dx\,dt\le
\int_0^{T_1}\int\Phi(2|b|\chi_R)\,d\mu_t\,dt+T_1\Phi(6R).
$$
In the same way we obtain
$$
2\int_0^T\int\Phi(\|\mathcal{A}_{\varepsilon}\|\chi_R)
\, \sigma^{\varepsilon}(x, t)\,dx\,dt\le
\int_0^{T_1}\int\Phi(2\|A\|\chi_R)\,d\mu_t\,dt+\Phi(2).
$$
Analogous estimates are fulfilled for
$$\|\mathcal{A}_{\varepsilon}\|+|\beta_{\varepsilon}|+\frac{\varepsilon\gamma(x)|x|}{\sigma^{\varepsilon}(t, x)}.$$
Repeating the arguments from the first part of the proof
and again
using the functions $\Theta_1$ and $\Theta_2$ appearing from Trevisan's result
and de la Vall\'ee Poussin's theorem for $\|A(t,x)\|I_{|x|\le R}$ and $|b(t,x)|I_{|x|\le R}$,
we obtain the estimate
\begin{multline*}
\mathbb{E}_{Q_{\varepsilon}}\Psi_d(f_i)\le \int\Theta(x_i\psi_R(x))
\,\mu_{\delta}(dx)
\\
+
\int\int \Bigl(\Theta_1(|\mathcal{L}_{\varepsilon}(x_i\psi_R(x))|)+\Theta_2
(|\sqrt{\mathcal{A}_{\varepsilon}}\nabla(x_i\psi_R(x))|^2)\Bigr)\, \sigma^{\varepsilon}(x, t)\,dx\,dt,
\end{multline*}
where $f_i(x)=x_i\psi_R(x)$ and the right-hand side is estimated by
\begin{multline*}
\sup_x|\Theta(x_i\psi_R(x))|
\\
+C_{10}\int_0^{T_1}\int_{|x|\le 2R}\Bigl(\Theta_1
(\|A(t, x)\|+|b(t, x)|)+\Theta_2(\|A(t, x)\|)\Bigr)\, \mu_t(dx)\,dt
\\
+\Theta_1(6R)+\Theta_2(2),
\end{multline*}
which does not depend on $\varepsilon$.
Here we apply the above estimates with $\Phi=\Theta_1$ and $\Phi=\Theta_2$ for the measure~$Q_\varepsilon$.
We have
$$
\mathbb{E}_{Q_{\varepsilon}} (I_{\|\omega\|\le R}\Psi_d )\le C_{11}(R)
$$
where $C_{11}(R)$ depends on $R$, but does not depend on $\varepsilon$.
For any number $\lambda\in(0, 1)$ one can take a sufficiently
large number $M$ such that for the compact set $K=\{\omega\colon \Psi_d(\omega)
\le M, \ \|\omega\|\le R\}$
there holds the estimate
$$
Q_{\varepsilon}(K)\ge 1-2\lambda \quad \forall \varepsilon>0.
$$
Therefore, the family of measures $Q_{\varepsilon}$ contains a sequence $\{Q_{\varepsilon_k}\}$ with
$\varepsilon_{k}\to 0$ weakly converging to some probability measure $P_{\delta}$.

Let us verify that the measure $P_{\delta}$ satisfies (i), (ii) and (iii).
The first and last properties are obtained in the limit letting $\varepsilon_k\to 0$
in the equality
$$
\int_{\Omega_d} f(\omega(t))\, Q_{\varepsilon_k}(d\omega)
=\int f(x)\sigma^{\varepsilon_k}(x, t)\,dx
 \quad \forall f\in C_0^{\infty}(\mathbb{R}^d).
$$
For the proof of~(ii) (the martingale property), as above,
 we have to show that for every bounded continuous
function
$g\colon \Omega_d\to\mathbb{R}$
that is measurable with respect to the $\sigma$-algebra $\mathcal{F}_s$ there holds the equality
$$
\int_{\Omega_d}\Bigl[f(\omega(t))-f(\omega(s))-\int_s^t
L_{\delta}f(\tau,\omega(\tau))\, d\tau\Bigr]g(\omega)\, P_{\delta}(d\omega)=0, \quad t\ge s,
$$
where $f\in C_0^{\infty}(\mathbb{R}^d)$.
To  this end, exactly as at the first step,
it suffices to show that
$$
\lim_{\varepsilon_k\to 0}\int_{\Omega_d}\biggl[\int_s^t
\mathcal{L}_{\varepsilon_k}f(\tau, \omega(\tau))\, d\tau\biggr]g(\omega)\,
Q_{\varepsilon_k}(d\omega)
=
\int_{\Omega_d}\biggl[\int_s^t
L_{\delta}f(\tau,\omega(\tau))\, d\tau\biggr]g(\omega)\,P_{\delta}(d\omega).
$$
Let $q^{ij}, z^i\in C^{\infty}([-1, T_1]\times\mathbb{R}^d)$ and
$\widetilde{L}=q^{ij}\partial_{x_i}\partial_{x_j}+z^i\partial_{x_i}$.
It is clear that the difference
$$
\int_{\Omega_d} \biggl[\int_s^t \widetilde{L}f(\tau,\omega(\tau))\, d\tau
\biggr]g(\omega)\,Q_{\varepsilon_k}(d\omega)
-
\int_{\Omega_d}\biggl[\int_s^t \widetilde{L}f(\tau,\omega(\tau))\, d\tau
\biggr]g(\omega)\,P_{\delta}(d\omega)
$$
tends to zero again by Lemma~\ref{lem1}. Moreover, the expression
$$
\biggl|\int_{\Omega_d}\biggl[\int_s^t (\widetilde{L}-L_{\delta})
f(\tau,\omega(\tau))\biggr]g(\omega)\,P_{\delta}(d\omega)\biggr|
$$
is estimated by
$$
\int_0^T\int_{\mathbb{R}^d} \Bigl(|a^{ij}_{\delta}-q^{ij}|\, |\partial_{x_i}\partial_{x_j}f|
+|b^i_{\delta}-z^i|\, |\partial_{x_i}f|\Bigr)\,d\mu_t^{\delta}\,dt,
$$
which can be made arbitrarily small by a suitable choice of $q^{ij}$ and $z^i$
approximating $a^{ij}_{\delta}$ and $b^i_{\delta}$ in $L^1$ with respect to the measure
$\mu_t^{\delta}\, dt$ on $[0, T_1]\times U$, where $U$ is a ball containing the support of~$f$.
Set
$$
z_{\varepsilon}^i(t, x)
=\frac{1-\varepsilon}{\sigma^{\varepsilon}(t, x)}
\int\int z^i(s, y)h_{\varepsilon}(t-s, x-y)\, \mu_s^{\delta}(dy)\,ds,
$$
$$
q^{ij}_{\varepsilon}(t, x)
=\frac{1-\varepsilon}{\sigma^{\varepsilon}(t, x)}
\int\int q^{ij}(s, y)h_{\varepsilon}(t-s, x-y)
\, \mu_s^{\delta}(dy)\,ds,
$$
$$
\widetilde{L}_{\varepsilon}=q^{ij}_{\varepsilon}\partial_{x_i}
\partial_{x_j}+z_{\varepsilon}^i\partial_{x_i}.
$$
Note that $z^i_{\varepsilon}-z^i$ and $q_{\varepsilon}^{ij}-q^{ij}$ converge to zero
uniformly on $[0, T_1]\times U$
for every ball~$U$. Since the functions $(\widetilde{L}-\widetilde{L}_{\varepsilon_k})f(t,x)$ converge to zero uniformly on
$[0, T]\times\mathbb{R}^d$ as $\varepsilon_k\to 0$, the expression
$$
\int_{\Omega_d}\biggl[\int_s^t (\widetilde{L}
-\widetilde{L}_{\varepsilon_k})f(\tau,\omega(\tau))\, d\tau\biggr]g(\omega)\,Q_{\varepsilon_k}(d\omega)
$$
tends to zero as $\varepsilon_k\to 0$.
Let
$$
C(f)=
d\sup_x|\nabla f(x)|+d\sup_x\|D^2f(x)\|.
$$
and $f(x)=0$ if $|x|>r$.
Note that
\begin{multline*}
|(\widetilde{L}_{\varepsilon_k}
-\mathcal{L}_{\varepsilon_k})f(t, x)|
\le \frac{1-\varepsilon_k}{\sigma^{\varepsilon_k}(t, x)}
\int\int \bigl[|a^{ij}_{\delta}(s, y)-q^{ij}(s, y)||\partial_{x_i}\partial_{x_j}f(x)|
\\
+|b^i_{\delta}(s, y)-z^i(s, y)||\partial_{x_i}f(x)|\bigr]
h_{\varepsilon_k}(t-s, x-y)\, \mu_s^{\delta}(dy)\,ds
\\
+\frac{\varepsilon_k\gamma}{\sigma^{\varepsilon_k}}(|\Delta f|+|x||\nabla f|)
\\
\le C(f)\frac{1-\varepsilon_k}{\sigma^{\varepsilon_k}(t, x)}
\int\int_{|y|\le r+1}\bigl[|a^{ij}_{\delta}(s, y)-q^{ij}(s, y)|
+|b^i_{\delta}(s, y)-z^i(s, y)|\bigr]
\\
\times h_{\varepsilon_k}(t-s, x-y)\, \mu_s^{\delta}(dy)\,ds\,dx +
\varepsilon_k C(f)\frac{(1+|x|)\gamma(x)}{\sigma^{\varepsilon_k}}.
\end{multline*}
Therefore, the integral
$$
\int_{\Omega_d}\biggl[\int_s^t (\widetilde{L}_{\varepsilon_k}
-\mathcal{L}_{\varepsilon_k})f(\tau,\omega(\tau))\, d\tau\biggr]g(\omega)\,Q_{\varepsilon_k}(d\omega)
$$
is estimated by
\begin{equation}\label{b1}
C(f)\int_0^{T}\int_{|x|\le r+1}\Bigl(|a^{ij}_{\delta}-q^{ij}|+|b^i_{\delta}-z^i|\Bigr)\,d\mu_t^{\delta}\,dt
+\varepsilon_k C(f)\int (1+|x|)\gamma(x)\,dx,
\end{equation}
which can be made arbitrarily small by a suitable choice of $q^{ij}$ and $z^i$ as above.
Denoting by $I(L, P)$ the integral
$$
\int_{\Omega_d}\biggl[\int_s^t
Lf(\tau, \omega(\tau))\, d\tau\biggr]g(\omega)\,P(d\omega),
$$
we have
\begin{multline*}
I(\mathcal{L}_{\varepsilon_k}, P_{\varepsilon_k})-I(L_{\delta}, P_{\delta})=
(I(\mathcal{L}_{\varepsilon_k}, P_{\varepsilon_k})-I(\widetilde{L}_{\varepsilon_k}, P_{\varepsilon_k}))
+(I(\widetilde{L}_{\varepsilon_k}, P_{\varepsilon_k})-I(\widetilde{L}, P_{\varepsilon_k}))
\\
+(I(\widetilde{L}, P_{\varepsilon_k})-I(\widetilde{L}, P_{\delta}))
+(I(\widetilde{L}, P_{\delta})-I(L_{\delta}, P_{\delta})).
\end{multline*}
Let $\lambda>0$. First we take $q^{ij}$ and $z^i$ such that for
the first and  forth terms in
the right-hand side the integrals with $\mu_t^\delta$ in
the corresponding bounding expressions (\ref{b1}) are smaller  than~$\lambda$.
Next we take $k$ such that the part with $\varepsilon_k$ in (\ref{b1})
for the first term and also the second and third terms are smaller than~$\lambda$.
It follows that $I(\mathcal{L}_{\varepsilon_k}, P_{\varepsilon_k})-I(L_{\delta}, P_{\delta})\to 0$.
Thus, we have verified (i), (ii), (iii) for~$P_{\delta}$.

\vskip .1in
{\bf V. Extension to the whole interval.}

We have constructed martingale representations $P_n$ defined
on $C([0,T-1/n],\mathbb{R}^d)$ for every smaller interval $[0,T-1/n]$.
It now remains to observe that Trevisan's a priori estimate employed above
(that is, \cite[Theorem~A2 and Corollary~A5]{TREV})
enables one to construct a representation on the whole interval~$[0,T]$ on which we have a
solution to the Fokker--Planck--Kolmogorov equation.
To this end we extend the measures $P_n$ to $C([0,T],\mathbb{R}^d)$ by using
the natural extension operator that associates to every function $\omega$
on $[0,T-1/n]$ the function that extends it by the constant value $\omega(T-1/n)$ on $(T-1/n,T]$.
In addition, the compact function $\Psi_d$ on $C[0,T-1/n],\mathbb{R}^d)$
ensured by Trevisan's result for each~$P_n$ (used at Step~I) can be chosen in a unified way, namely,
by taking such a function on $C([0,T],\mathbb{R}^d)$ and then restricting it
to $C([0,T-1/n],\mathbb{R}^d)$ embedded into $C([0,T],\mathbb{R}^d)$ by means of extensions
as explained above. It is readily seen from the formulation of the cited results from \cite{TREV}
mentioned above that in this way we obtain a compact function on $\Omega_d$ the integrals
of which with respect to the extensions of $P_n$ to $\Omega_d$ remain uniformly bounded
(here it is important, of course, that in our condition~(\ref{con-new})
the integral is taken over all of~$[0,T]$).
Hence the sequence of extensions of measures $P_n$ contains a weakly convergent
subsequence. The limit of this subsequence gives the desired representation.
The verification of this is analogous to the previous steps. Of course, the main
point is check the martingale property, which is not automatic in case
of discontinuous $A$ and $b$, but follows again my smooth approximations and estimate~(\ref{ek1}).
\end{proof}

The main difficulty with the smoothing of coefficients is due to the necessity to obtain in
the limit the solution we consider (but not an arbitrary solution, since there can exist many),
in addition, for the approximating solutions we have to keep our Lyapunov-type condition.

\begin{remark}
\rm
If the Cauchy problem has a solution on the whole half-line, then a similar reasoning
gives a representing martingale measure on the space of paths on $[0,+\infty)$,
but here one must be careful, since this space is not separable, so the desired measure
is defined not on all Borel sets, but on some smaller $\sigma$-field.
\end{remark}

Finally, we give an example showing that the integrability of $(1+|x|)^{-2}|\langle b(t, x), x\rangle|$
can hold even in the case where the
function $(1+|x|)^{-1}|b(t, x)|$ is not integrable  with respect to the solution.

\begin{example}\label{ex2}
\rm
Let $d=2$ and let $(r, \varphi)$ be polar coordinates.
We construct an example of a stationary solution $\mu=\varrho\,dx$
to the equation with the unit matrix  $A$ and the drift coefficient
$b=\nabla\varrho/\varrho$ for a suitable function~$\varrho$.
We recall that
$$
|\nabla\varrho(x)|^2=r^{-2}|\partial_{\varphi}\varrho|^2
+|\partial_r\varrho|^2.
$$
Therefore, it suffices to find a  smooth nonnegative
function $\varrho$ for which
$$
\varrho, (1+r)^{-1}|\partial_r\varrho|\in L^1(\mathbb{R}^2),
\quad (1+r)^{-2}|\partial_{\varphi}\varrho|\notin L^1(\mathbb{R}^2).
$$
Set
$$
\varrho(r, \varphi)=\sum_{n=1}^{\infty}2^{-n}\psi(r-n)(2
+\sin(4^n\varphi)),
$$
where $\psi\in C^{\infty}_0((0, 1))$ and $\psi\ge 0$ is not identically zero.
We observe that for every point $(r, \varphi)\in(0, +\infty)\times[0, 2\pi]$
only one term of the series is nonzero.
It is clear that $\varrho\in C^{\infty}(\mathbb{R}^2)$, $\varrho\ge 0$ and
$$
\varrho(r, \varphi)+|\partial_r\varrho(r, \varphi)|\le C2^{-r}
$$
for some number $C>0$. However,
$$
|\partial_{\varphi}\varrho(r, \varphi)|=\sum_{n=1}^{\infty}2^{n}\psi(r-n)|
\cos(4^n\varphi)|.
$$
Since the integral of $|\cos(4^n\varphi)|$ is estimated from below by $2\pi$, we have
$$
\int_0^{\infty}\int_0^{2\pi}r^{-1}|\partial_{\varphi}\varrho|
\,d\varphi\,dr\ge
\sum_{n=1}^{\infty}2\pi c_{\psi}(n+1)^{-1}2^{n}=+\infty, \quad c_{\psi}
=\int_0^1|\psi(s)|\,ds.
$$
Thus, $r^{-2}|\partial_{\varphi}\varrho|\notin L^1(\mathbb{R}^2)$.
In this example $b$ is a gradient. An example without this additional property
is even simpler. We observe that the standard Gaussian density
$\gamma$ is a stationary solution to the equation with $A=I$ and $b(x)=-x$,
so it remains a solution for the equation with a perturbed drift $-x+v(x)$,
where a smooth vector field $v$ is chosen such  that ${\rm div}\, (\gamma v)=0$
and $(x,v(x))=0$.  For example, we can take $v$ of the form
$v(x)=\gamma(x)^{-1}h(|x|^2)Ux$ with an orthogonal operator $U$ such that $(Ux,x)=0$. Of course,
$h$ can be rapidly increasing, so that $|v|\gamma$ will not be integrable.
\end{example}

\begin{remark}
\rm
The presented results can be extended with minor technical changes to as follows.
Let $V\in C^2([1, +\infty)$, there is $C>0$ such that
$$
\Bigl|\frac{V''(s)}{V''(t)}\Bigr|+\Bigl|\frac{V'(s)}{V'(t)}\Bigr|\le C
\quad \hbox{whenever $|t-s|\le 1$,}
$$
$V\ge 0$, $|V''|+|V'|\le C$ and $\lim\limits_{s\to+\infty}V(s)=+\infty$,
i.e., the integral of $V'$ diverges.
Then condition (\ref{con-new}) can be replaced by
\begin{multline*}
\int_0^T\int_{\mathbb{R}^d}
\Bigl[ \Bigl(|V''(1+|x|^2)|(1+|x|^2)+|V'(1+|x|^2)|\Bigr)\|A(t,x)\|
\\
 +|\langle b(t,x), x\rangle|\, |V'(1+|x|^2)|\Bigr]\, \mu_t(dx)\,dt<\infty.
\end{multline*}
For $V(s)=\log s$ we obtain the original condition (\ref{con-new}).
If $V(s)=\log(1+\log s)$, then we arrive at the condition
$$
\frac{\|A(t,x)\|}{(1+|x|^2)\log(1+|x|^2)}, \
\frac{|\langle b(t,x), x\rangle|}{(1+|x|^2)\log(1+|x|^2)}\in L^1(\mu_t\,dt).
$$
\end{remark}

\begin{remark}\label{re3.4}
\rm
The superposition principle applies not only to linear Fokker--Planck--Kolmogorov equations,
but also to nonlinear equations. Let $\{\mu_t\}$ be a solution to the Cauchy problem
$$
\partial_t\mu_t=\partial_{x_i}\partial_{x_j}(a^{ij}(t, x, \mu)\mu_t)
-\partial_{x_i}(b^i(t, x, \mu)\mu_t), \quad \mu_0=\nu.
$$
For a precise definition of a solution and typical examples of dependence of $A$ and $b$ on
the solution $\mu$ are given in \cite[Chapter~6]{book}, \cite{ManRSh15}, \cite{ManSh14}.
In particular, typical global assumptions are expressed in terms of a Lyapunov function~$V$:
$$
L_{\mu}V\le C(\mu)+C(\mu)V, \quad V\in L^1(\nu).
$$
If $V(x)=\log(1+|x|^2)$, then by Proposition~\ref{pr1} the solution
$\{\mu_t\}$ satisfies condition~(\ref{con-new}).
Given a solution  $\{\mu_t\}$, we can regard it as a solution to the
linear operator~$L_{\mu}$. Therefore, there exists the corresponding solution $P_{\nu}$ to the
martingale problem
such that $\mu_t$ is the one-dimensional distribution of the measure $P_{\nu}$ on~$C([0, T], R^d)$.
Hence we can assume that the measure $P_{\nu}$ solves the martingale problem with the operator $L_{\mu}$
that depends on $P_{\nu}$ through~$\mu$,
i.e., solves the martingale problem corresponding to the stochastic McKean--Vlasov equation
(see \cite{Funaki}).
Thus, using the superposition principle and solutions to the Fokker--Planck--Kolmogorov
equation one can construct solutions to the martingale problem for nonlinear
stochastic equations. This approach is applied for constructing probabilistic
representations of solutions to PDEs (see, e.g., \cite{BarbuR}).
\end{remark}

It is worth noting that the superposition principle can be useful for the study
of uniqueness problems for Fokker--Planck--Kolmogorov equations with coefficients
of low regularity, on this topic see the book \cite{book} and the papers
 \cite{BRS11a}, \cite{BRS11b}, \cite{BRS13}, \cite{BRS15}, \cite{LL}, \cite{Sh12}, \cite{Z}.
We also plan to study analogous questions for infinite-dimensional equations
in the spirit of~\cite{BDPR10}, \cite{BDPR11} and~\cite{BDPRS15}.

This research was supported by the RFBR Grants 18-31-20008 and 17-01-00662,
the CRC 1283 at Bielefeld University and the DFG Grant  DFG RO 1195/12-1.

\end{document}